\def\natnums{\mathbb N}
\def\reals{\mathbb R}
\def\R{\reals}
\def\N{\natnums}
\newtheorem{theo}{Theorem}
\newtheorem{lem}[theo]{Lemma}
\newtheorem{cor}[theo]{Corollary}
\newtheorem{thm}[theo]{Theorem}
\newtheorem{prop}[theo]{Proposition}
\newtheorem{defn}[theo]{Definition}
\theoremstyle{definition}
\theoremstyle{remark}
\numberwithin{equation}{section}
\def\epsilon{\varepsilon}
\newcommand{\To}{\longrightarrow}
\providecommand{\MR}{\relax\ifhmode\unskip\space\fi MR }
\providecommand{\href}[2]{#2}
\title[A 1-separably injective space with without $\ell_\infty$]{A 1-separably injective Banach space that does not contain $\ell_\infty$}
\author{ Antonio Avil\'{e}s}
\address{Departamento de Matem\'{a}ticas\\
Facultad de Matem\'{a}ticas\\ Universidad de Murcia\\ 30100 Espinardo (Murcia)\\
Spain} \email{avileslo@um.es}
\author{Piotr Koszmider}
\address{Institute of Mathematics\\ Polish Academy of Sciences \\ \'{S}niadeckich 8\\ 00-656 Warsaw \\ Poland} \email{piotr.koszmider@impan.pl}
\subjclass{03E35, 06E05, 46B26, 54G05}
\keywords{Tightly $\sigma$-filtered Boolean algebra, separably injective Banach space, compact $F$-space}
\thanks{First author supported by MINECO and FEDER (MTM2014-54182-P) and by Fundaci\'{o}n S\'{e}neca - Regi\'{o}n de Murcia (19275/PI/14). The research  of the second named author was partially supported by   grant PVE Ci\^{e}ncia sem Fronteiras - CNPq (406239/2013-4). This work was done during a visit of the first author to the IMPAN in Warsaw and another visit of both authors to the University of S\~{a}o Paulo.}
\begin{document}

\begin{abstract}
We show that the problem whether every $1$-separably injective Banach space contains
an isomorphic copy of $\ell_\infty$ is undecidable. 
Namely, unlike under the continuum hypothesis, assuming Martin's axiom and the negation of the continuum hypothesis, there is an $1$-separably injective Banach space of the form $C(K)$ (which means that $K$
is an $F$-space) without an isomorphic copy of $\ell_\infty$. 

This result is a consequence of our  study of $\omega_2$-subsets of tightly $\sigma$-filtered Boolean algebras
introduced by Koppelberg  for which we obtain some general principles 
useful when transferring  properties of Boolean algebras to the level of Banach spaces.
\end{abstract}

\maketitle

\section{Introduction}

A Banach space $\mathcal X$ is called $\lambda$-injective if given two Banach
spaces $\mathcal Y\subseteq \mathcal Z$ and a bounded linear
operator $T: \mathcal Y\rightarrow \mathcal X$ 
there is a bounded linear operator $\widetilde{T}: \mathcal Z\rightarrow \mathcal X$ which
extends $T$ and satisfies $\|\widetilde{T}\|\leq \lambda \|T\|$. If $\mathcal X$ is $\lambda$-injective
for some $\lambda\in \R$, then we say that $\mathcal X$ is injective. 
Despite major efforts since the 50ties it is not know what are exactly injective Banach spaces.
Nachbin and Kelley proved in \cite{nachbin} and \cite{kelley} that $1$-injective Banach spaces are isometric
to the spaces of the form $C(\mathfrak K_\mathfrak B)$ for some injective Boolean algebra $\mathfrak B$,
that is, for a complete Boolean algebra $\mathfrak B$, where $\mathfrak K_\mathfrak B$ is the Stone space
of $\mathfrak B$, which is equivalent for a compact space to being extremally disconnected. 
Rosenthal in his attempt of understanding injective Banach spaces (\cite{rosenthal1, rosenthal2}) 
proved, among others, that
all injective Banach spaces contain isomorphic copies of $\ell_\infty$.

One can consider  natural separable versions of $\lambda$-injectivity. Namely,
a Banach space $\mathcal X$ is called  $\lambda$-separably injective if given two separable Banach
spaces $\mathcal Y\subseteq \mathcal Z$ and a bounded linear
operator $T: \mathcal Y\rightarrow \mathcal X$ 
there is a bounded linear operator $\widetilde{T}: \mathcal Z\rightarrow \mathcal X$ which
extends $T$ and satisfies $\|\widetilde{T}\|\leq \lambda \|T\|$. If $\mathcal X$ is $\lambda$-separably injective
for some $\lambda\in \R$, then we say that $\mathcal X$ is separably injective.  In these terms
the classical Zippin's theorem says that every separable and separably injective Banach space
is isomorphic to $c_0$. However,  Ostrovskii essentially proved (\cite{ostrovskii}, cf. \cite{sepinybook}) that there are
no separable  $\lambda$-separably injective infinite dimensional Banach spaces
for $\lambda<2$. Nonseparable $\lambda$-separably injective Banach spaces have been 
investigated in the book \cite{sepinybook}, where among others many examples of nonseparable
separably injective Banach spaces not containing an isomorphic copy of $\ell_\infty$ are given.

However, assuming the Continuum Hypothesis  all $1$-separably
injective Banach spaces contain an isomorphic copy of $\ell_\infty$. This follows from
a result of Lindenstrauss from  (\cite{lindenstrauss})
that under this hypothesis all $1$-separably injective spaces are universally separably
injective, that is satisfy a version of the definition of separably injective Banach space where
only $\mathcal Y$ must be separable and $\mathcal Z$ has arbitrary density.  Having this
fact one uses an observation that always all universally separably injective Banach spaces contain
an isomorphic copy of $\ell_\infty$ (\cite{sepinybook}). It should be added that without any special set-theoretic assumptions all
$1$-separably injective spaces are Grothendieck and Lindenstrauss.

The main result of this paper is to show that some extra set-theoretic assumption 
is necessary to obtain such isomorphic copies of $\ell_\infty$ as we show that it is
consistent that there is a $1$-separably injective Banach space without an isomorphic copy 
of $\ell_\infty$ (Corollary \ref{main-result}). Our Banach space is of the form $C(\mathfrak K)$ for 
$\mathfrak K$ Hausdorff and compact.
Such $1$-separably injective spaces  are 
exactly those, where  $\mathfrak K$ is a so called $F$-space, that is compact Hausdorff,  where closures
of disjoint $F_\sigma$-sets are disjoint (a little bit more generally, a $C^*$-algebra
is a $1$-separably injective Banach space if and only if it is commutative and of the form
$C_0(X)$ where $X$ is locally compact and substonean \cite{chu}). So
our result may also be considered as a strengthening to
the level of Banach spaces of a result of Dow and Hart (\cite{DowHart})
that it is consistent that there is an $F$-space which does not continuously map onto
$\beta\N$.  

In fact, the main result follows from our analysis of  infinite Boolean algebras  $\mathfrak{B}$ for which 
one can consider  the following extension properties:
\begin{enumerate}
\item (countable separation property) For every two countable sets $X,Y\subset \mathfrak{B}$ such that $x\leq y$ for all $x\in X$ and $y\in Y$, there exists $z\in \mathfrak{B}$ such that $x\leq z \leq y$ for all $x\in X$, $y\in Y$.
\item If $A_0\subset A_1$ are Boolean algebras with $A_1$ generated by a single element over $A_0$, then every Boolean morphism $A_0\To \mathfrak{B}$ extends to $A_1$.
\item If $A_0\subset A_1$ are Boolean algebras with $A_1$ countable, then every Boolean morphism $A_0\To \mathfrak{B}$ extends to $A_1$.   
\item If $A_0\subset A_1$ are Boolean algebras with $A_0$ countable, then every Boolean morphism $A_0\To \mathfrak{B}$ extends to $A_1$.
\item If $A_0\subset A_1$ are Boolean algebras with $A_0$ countable and $A_1$ isomorphic to $\mathcal{P}(\omega)$, then every Boolean morphism $A_0\To \mathfrak{B}$ extends to $A_1$.
\item $\mathfrak{B}$ contains a subalgebra isomorphic to $\mathcal{P}(\omega)$.
\end{enumerate}

The implications $1 \Leftrightarrow 2 \Leftrightarrow 3 \Leftarrow 4 \Leftrightarrow 5 \Rightarrow 6$ hold in ZFC, while $3\Rightarrow 4$ holds under CH. However, Dow and Hart \cite[Theorem 5.10]{DowHart} proved that the implication $3\Rightarrow 6$ fails under MA and $\mathfrak{c}=\aleph_2$. The Stone space of a Boolean algebra with property
1 is an F-space, so, in topological terms, as mentioned above, what Dow and Hart found is a zero-dimensional F-space that does not map continuously onto $\beta\omega$. Analogous properties can be considered for a Banach space $\mathfrak{X}$, cf. \cite{sepiny} which using our previous terminology have the form:

\begin{enumerate}
\item If we have a countable family of balls such that every two of them have nonempty intersection, then the intersection of the whole family is nonempty.
\item If $A_0\subset A_1$ are Banach spaces with $A_1$ generated by a single element over $A_0$, then every bounded operator $A_0\To \mathfrak{X}$ extends to a bounded operator on $A_1$ with the same norm.
\item (1-separably injective) If $A_0\subset A_1$ are Banach spaces with $A_1$ separable, then every bounded operator $A_0\To \mathfrak{X}$ extends to a bounded operator on $A_1$ with the same norm.
\item (1-universally separably injective) If $A_0\subset A_1$ are Banach spaces with $A_0$ separable, then every bounded operator $A_0\To \mathfrak{X}$ extends to a bounded operator on $A_1$ with the same norm.
\item If $A_0\subset A_1$ are Banach spaces with $A_0$ separable and $A_1$ isomorphic to $\ell_\infty$, then every bounded operator $A_0\To \mathfrak{X}$ extends to a bounded operator on $A_1$ with the same norm.
\item $\mathfrak{X}$ contains a subspace isomorphic to $\ell_\infty$.
\end{enumerate}

Again, $1 \Leftrightarrow 2 \Leftrightarrow 3 \Leftarrow 4 \Leftrightarrow 5 \Rightarrow 6$ hold in ZFC, while $3\Rightarrow 4$ holds under CH by the previous discussion. It is proven in \cite{sepiny} that if $\mathfrak{K}$ is the Stone space of the Boolean algebra of Dow and Hart mentioned above, then the space of continuous functions $C(\mathfrak{K})$ shows that $3\not\Rightarrow 4$ under MA and $\mathfrak{c}=\aleph_2$. However it was left as an open problem whether one could consistently disprove $3\Rightarrow 6$. Our main result is an answer to this question, indeed we are able to prove that the same space $C(\mathfrak{K})$ does not contain isomorphic copies of $\ell_\infty$ under MA and $\mathfrak{c}>\aleph_1$. The Banach spaces of universal dispostion $\mathfrak{S}^{\omega_1}(X)$ constructed in \cite{unidisp} starting from a separable space $X$, are subspaces of $C(\mathfrak{K})$ by combined results of \cite[Theorem 29]{AviBre} and \cite{sepinybook}. Hence these spaces consistently do not contain isomorphic copies of $\ell_\infty$.

The key notion of this paper is that of a tightly $\sigma$-filtered Boolean algebra, studied in \cite{Koppelberg,Geschke,AviBre} and under different names in \cite{DowHart}. We generalize the fact that tightly $\sigma$-filtered algebras do not contain $\omega_2$-chains \cite[Proposition 2.5]{DowHart} to a more general principle (Theorem~\ref{omegathm}) asserting that if an $\omega_2$-sequence of elements often satisfies some Boolean equation, then this equation must be also often satisfied in permuted order. We further export these ideas to the computation of norms in the space of continuous functions on the Stone space, and then we use a trick from \cite{BrechKoszmider}  to relate the embeddability of $\ell_\infty$ inside $C(\mathfrak{K})$ to the existence of $\omega_2$-chains in $\mathcal{P}(\omega)/fin$.

\section{Tightly $\sigma$-filtered Boolean algebras}

 Two subalgebras $A_1$ and $A_2$ of a Boolean algebra $B$ are said to commute if whenever we have $a_1\wedge a_2 = 0$ with $a_i\in A_i$, then there exist $b_1,b_2\in A_1\cap A_2$ such that $a_1\leq b_1$, $a_2\leq b_2$ and $b_1\wedge b_2 = 0$. This is equivalent to say that for every $c_1\in A_1$ and $c_2\in A_2$ with $c_1\leq c_2$ there exists $r\in R$ such that $c_1\leq r \leq c_2$. A diagram of injective Boolean morphisms $$
\begin{CD}
A @>>> B\\
@AAA @AAA\\
R @>>> S
\end{CD}
$$
is a push-out diagram if, when we identify all these algebras as subalgebras of $B$ through those morphisms, we have that $\langle A\cup S\rangle = B$ (we denote by $\langle X \rangle$ the subalgebra generated by the set $X$), $A\cap S = R$, and $A$ and $S$ commute. This is equivalent to the fact that this is a push-out diagram in the category of Boolean algebras and (not necessarily injective) morphisms, in the general categorical sense. From now on, all arrows between Boolean algebras are supposed to be injective morphisms. Whenever we are given arrows
$$
\begin{CD}
A \\
@AAA \\
R @>>> S
\end{CD}
$$
there is a unique way to complete the diagram to obtain a push-out diagram. This is to make the free sum of $S$ and $A$ and make a quotient to identify the image of $R$ in $A$ and the image of $R$ in $S$.

\begin{defn}
A Boolean algebra $\mathfrak{B}$ is tightly $\sigma$-filtered if there exists a chain of subalgebras $\{B_\alpha\}_{\alpha<\xi}$ indicated in an ordinal $\xi$ with the following properties:
\begin{enumerate}
\item $B_0 = \{0,1\}$,
\item $\mathfrak{B} = \bigcup_{\alpha<\xi}B_\alpha$,
\item $B_\alpha = \bigcup_{\beta<\alpha} B_{\beta}$ if $\alpha$ is a limit ordinal
\item For every $\alpha<\xi$ there exist countable subalgebras $R_\alpha,S_\alpha\subset \mathfrak{B}$ such that the diagram of inclusions
$$
\begin{CD}
B_{\alpha} @>>> B_{\alpha+1}\\
@AAA @AAA\\
R_\alpha @>>> S_\alpha
\end{CD}
$$
is a push-out diagram.
\end{enumerate}
\end{defn}

It is easy to construct tightly $\sigma$-filtered Boolean algebras by induction. One starts with $\{0,1\}$, at limit steps one takes unions, and at successor stages one chooses any countable subalgebra $R_\alpha\subset B_\alpha$ and an external countable superalgebra $S_\alpha\supset R_\alpha$ and then one takes $B_{\alpha+1}$ to be the push-out of the diagram
$$
\begin{CD}
B_{\alpha} \\
@AAA \\
R_\alpha @>>> S_\alpha
\end{CD}
$$
For example, free Boolean algebras are tightly $\sigma$-filtered. They are the one that we obtain if in this procedure we take $R_\alpha = \{0,1\}$ and $S_\alpha = \{0,1,a,b\}$ all the time. Another example, more relevant for us, is the following: If $B$ has cardinality at most $\mathfrak{c}$ then there are, up to isomorphism, only $\mathfrak{c}$ diagrams of the form 
$$
\begin{CD}
B \\
@AAA \\
R @>>> S
\end{CD}
$$
with $R$, $S$ countable algebras. Therefore, by a book-keeping argument, we can construct a tightly $\sigma$-filtered Boolean algebra $\mathfrak{B} = \bigcup_{\alpha<\mathfrak{c}} B_\alpha$ such that, for every diagram of the form 
$$
\begin{CD}
B_\alpha \\
@AAA \\
R @>>> S
\end{CD}
$$
with $R$, $S$ countable, there exists $\beta>\alpha$ such that $R_\beta = R$ and we have naturally identified diagrams

$$
\begin{CD}
B_\beta @. @. @. B_\beta\\
@AAA \\
B_\alpha @. @. \ \ \ \ \ \ \ \ \equiv \ \ \ \ \ \ \ @. @AAA\\
@AAA  \\
R @>>> S @. @. R_\beta @>>> S_\beta
\end{CD}
$$

This algebra has clearly property 3 of the introduction, so we have the following fact:

\begin{prop}\label{exists}
There exists a tightly $\sigma$-filtered Boolean algebra of size $\mathfrak{c}$ that has the countable separation property.
\end{prop}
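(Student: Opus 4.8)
The plan is to carry out explicitly the book-keeping construction sketched in the paragraph immediately preceding the statement, and then to verify that the resulting algebra has the countable separation property, i.e.\ property~3 of the introduction (equivalently property~1 or~2). The skeleton of the construction is already laid out: build $\mathfrak{B} = \bigcup_{\alpha<\mathfrak{c}} B_\alpha$ by transfinite recursion, starting with $B_0 = \{0,1\}$, taking unions at limits, and at successor stages forming $B_{\alpha+1}$ as the push-out of a diagram $B_\alpha \leftarrow R_\alpha \to S_\alpha$ with $R_\alpha, S_\alpha$ countable. The content to be supplied is the \emph{choice} of the diagrams along the way so that the genericity condition is met.

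\medskip

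\noindent\emph{Key steps.} First I would record the counting fact that for a fixed Boolean algebra $B$ of size $\le\mathfrak{c}$ there are, up to the obvious notion of isomorphism of diagrams fixing $B$, only $\mathfrak{c}$ diagrams $B\leftarrow R\to S$ with $R\subseteq B$ and $S$ countable: indeed a countable $S$ has only $\mathfrak{c}$ isomorphism types together with at most countably many embeddings of a countable $R$, and $R$ ranges over the $\le\mathfrak{c}$ countable subalgebras of $B$; since at each stage $|B_\alpha|\le\mathfrak{c}$ this is preserved throughout the recursion, the bookkeeping has only $\mathfrak{c}$ tasks to handle. Second, fix a surjection $\mathfrak{c}\to \mathfrak{c}\times\mathfrak{c}$, $\beta\mapsto(\gamma(\beta),\delta(\beta))$ with $\gamma(\beta)\le\beta$, and at stage $\beta$ enumerate (using the recursion hypothesis that $B_{\gamma(\beta)}$ has already been built) the diagrams over $B_{\gamma(\beta)}$ as $\langle R^{\gamma(\beta)}_\delta\to S^{\gamma(\beta)}_\delta\rangle_{\delta<\mathfrak{c}}$; then set $R_\beta := R^{\gamma(\beta)}_{\delta(\beta)}$, $S_\beta := S^{\gamma(\beta)}_{\delta(\beta)}$ and let $B_{\beta+1}$ be the push-out. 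By the last paragraph of the section (the existence and uniqueness of the push-out completing a span, and the fact that push-out diagrams are determined by the span), this is well-defined and yields a tightly $\sigma$-filtered algebra. Third, I would observe that every countable subalgebra $A_0 \subseteq \mathfrak{B}$ and every countable overalgebra $A_1 \supseteq A_0$ gives rise to such a diagram: by a chain-condition/closing-off argument there is $\gamma<\mathfrak{c}$ with $A_0\subseteq B_\gamma$, and the span $B_\gamma \leftarrow A_0 \to A_1$ is handled at some stage $\beta>\gamma$ with $\gamma(\beta)=\gamma$, giving a push-out $B_{\beta+1}$ of $B_\beta \leftarrow A_0 \to A_1$. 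Fourth, I would conclude property~2 of the introduction (every morphism $A_0\to A_1$ generated by a single element over $A_0$ extends over all of $\mathfrak{B}$): the push-out property says precisely that $A_1$ and $B_\beta$ commute inside $B_{\beta+1}$ with intersection $A_0$, and commuting over $A_0$ is exactly the condition allowing one to interpolate, so the inclusion $A_0\hookrightarrow \mathfrak{B}$ extends to $A_1\hookrightarrow \mathfrak{B}$; by the stated ZFC equivalences $2\Leftrightarrow 3$, $\mathfrak{B}$ has the countable separation property. Since $|\mathfrak{B}| = \mathfrak{c}$ by construction, we are done.

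\medskip

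\noindent\emph{Main obstacle.} The construction itself is routine transfinite recursion; the delicate points are (a) making the bookkeeping genuinely \emph{catch} every span $B_\gamma\leftarrow A_0\to A_1$ with $A_0,A_1$ countable that could ever appear as a subobject of the final $\mathfrak{B}$ --- this needs the reflection argument that any countable $A_0\subseteq\mathfrak{B} = \bigcup_\alpha B_\alpha$ already sits inside some $B_\gamma$, which in turn uses that the chain is continuous and of uncountable cofinality $\mathfrak{c}$; and (b) checking that, once the span $B_\beta\leftarrow A_0\to A_1$ is realized as a push-out at stage $\beta$, \emph{further} push-out extensions at later stages do not destroy the commuting/interpolation property relating $A_1$ to the whole algebra. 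Point (b) is where one should be slightly careful: one needs that if $A_1$ commutes with $B_{\beta+1}$ over $A_0$ inside $B_{\beta+2}$, and $B_{\beta+1}$ commutes with $B_{\beta+2}$ appropriately, then $A_1$ commutes with $B_{\beta+2}$ over $A_0$ --- i.e.\ a transitivity/composition lemma for commuting pairs and push-outs, which presumably is part of the standard theory referenced in \cite{Koppelberg,Geschke}. I expect this transitivity of commuting squares to be the only step requiring a genuine (if short) argument rather than bookkeeping; everything else is the free-sum-and-quotient construction already described in the text.
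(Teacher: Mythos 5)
Your proposal is correct and follows essentially the same route as the paper, which simply performs the book-keeping construction described in the paragraph preceding the statement and observes that the resulting algebra has property 3 of the introduction, hence the countable separation property via the ZFC equivalence $1\Leftrightarrow 3$. The only remark worth adding is that your worry (b) is vacuous: once $A_1$ is embedded into $B_{\beta+1}\subseteq\mathfrak{B}$ extending the inclusion of $A_0$ (equivalently, once an interpolant $x\le z\le y$ exists in $B_{\beta+1}$), nothing done at later stages can undo this, since order relations and morphisms into a subalgebra persist in every superalgebra, so no transitivity lemma for commuting squares is needed.
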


For further properties of this algebra we can refer to \cite{DowHart,AviBre}.

\section{The additive $\sigma$-skeleton}

 When working with tightly $\sigma$-filtered algebras, the tower of subalgebras of the definition is often not rich enough, so one needs to consider a larger subfamily of subalgebras. From this moment on, we fix a tightly $\sigma$-filtered Boolean algebra $\mathfrak{B} = \bigcup_{\alpha<\xi}B_\alpha$ and its tower of subalgebras as in the definition. For $\Gamma\subset \xi$ we call $E(\Gamma) = \langle S_i : i\in \Gamma\rangle$. We say that $\Gamma$ is saturated if $R_\gamma \subset E(\Gamma\cap \gamma)$ for every $\gamma\in \Gamma$. The subalgebras of $\mathfrak{B}$ of the form $E(\Gamma)$ with $\Gamma$ saturated constitute a so-called an additive $\sigma$-skeleton, cf.\cite{Geschke,AviBre}. In this section, we check the properties that we need of this family of subalgebras.

\begin{lem}\label{tightness}
For every countable set $H\subset \mathfrak{B}$ there exists a countable saturated set $\Gamma_H\subset\xi$ such that $H\subset E(\Gamma_H)$.
\end{lem}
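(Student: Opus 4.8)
The plan is to first pin down the subalgebras $B_\alpha$ of the defining tower in terms of the countable algebras $S_i$, and then to obtain $\Gamma_H$ by a countable closing-off procedure. First I would prove, by transfinite induction on $\alpha\le\xi$, that $B_\alpha=E(\alpha)$, where an ordinal $\alpha$ is identified with $\{\beta:\beta<\alpha\}$ and $E(\xi):=\mathfrak B$. Indeed $B_0=\{0,1\}=\langle\emptyset\rangle=E(0)$; at a successor $\alpha=\beta+1$ the push-out clause of the definition (together with the characterization of push-outs recalled at the start of Section~2) gives $B_{\beta+1}=\langle B_\beta\cup S_\beta\rangle$, so $B_{\beta+1}=\langle E(\beta)\cup S_\beta\rangle=E(\beta+1)$ by the inductive hypothesis; and at a limit $\alpha$ the union clause gives $B_\alpha=\bigcup_{\beta<\alpha}E(\beta)=E(\alpha)$. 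In particular $\mathfrak B=\langle S_i:i<\xi\rangle$, from which I would extract two facts: every $b\in\mathfrak B$, being a Boolean combination of finitely many generators, lies in $E(F)$ for some finite $F\subset\xi$; and, for each $\gamma<\xi$, since $R_\gamma$ is a countable subalgebra of $B_\gamma=E(\gamma)$, applying the first fact inside $B_\gamma$ to each of the countably many elements of $R_\gamma$ and unioning the resulting finite index sets yields a countable $F\subset\gamma$ with $R_\gamma\subset E(F)$. I will also use that $E$ is monotone in $\Gamma$.

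With these in hand, the construction of $\Gamma_H$ runs as follows. Since $H$ is countable, pick a countable $\Gamma_0\subset\xi$ with $H\subset E(\Gamma_0)$. Recursively, given a countable $\Gamma_n\subset\xi$, for every $\gamma\in\Gamma_n$ choose a countable $F_\gamma\subset\gamma$ with $R_\gamma\subset E(F_\gamma)$, and set $\Gamma_{n+1}=\Gamma_n\cup\bigcup_{\gamma\in\Gamma_n}F_\gamma$, which is again countable. Put $\Gamma_H=\bigcup_{n<\omega}\Gamma_n$, a countable subset of $\xi$.

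It then remains to check the two required properties. By monotonicity of $E$ we get $H\subset E(\Gamma_0)\subset E(\Gamma_H)$. For saturation, fix $\gamma\in\Gamma_H$ and choose $n$ with $\gamma\in\Gamma_n$; then $F_\gamma\subset\Gamma_{n+1}\subset\Gamma_H$ and $F_\gamma\subset\gamma$ by construction, so $F_\gamma\subset\Gamma_H\cap\gamma$ and hence $R_\gamma\subset E(F_\gamma)\subset E(\Gamma_H\cap\gamma)$. The argument has no real obstacle; the single delicate point is that the supports $F_\gamma$ must be chosen \emph{below} $\gamma$, which is exactly what the identification $B_\gamma=E(\gamma)$ guarantees — and it is also the reason the closing-off terminates after $\omega$ steps, since the saturation condition at an index $\gamma$ constrains only $\Gamma_H\cap\gamma$. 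Everything else is routine bookkeeping with countable unions.
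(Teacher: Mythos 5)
Your proof is correct and follows essentially the same route as the paper: a countable closing-off procedure in $\omega$ stages, adjoining for each $\gamma$ already present a countable $F_\gamma\subset\gamma$ with $R_\gamma\subset E(F_\gamma)$ and taking the union. The only difference is that you spell out the justification (via $B_\alpha=E(\alpha)$ and finite supports) for facts the paper treats as clear.
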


\begin{proof}
It is clear that we can find a countable set $\Gamma$ such that $H\subset E(\Gamma)$. So what we need to check is that there exists a larger countable $\Gamma_H\supset \Gamma$ which is saturated. For every $\gamma\in \xi$, let $\Delta_\gamma$ be a countable subset of $\gamma$ such that $R_\gamma\subset E(\Delta_\gamma)$. For every set $Z\subset \xi$, let $\Delta' = \bigcup_{\gamma\in Z}\Delta_\gamma$. We can take $\Gamma_H = \Gamma\cup \Gamma'\cup\Gamma''\cup\cdots$.
\end{proof}

\begin{lem}\label{pushoutlemma}
Let $\Gamma_1$ and $\Gamma_2$ be subsets of $\xi$ such that $\Gamma_1$, $\Gamma_2$ and $\Gamma_1\cap \Gamma_2$ are all saturated, then we have a push-out diagram of inclusions
$$
\begin{CD}
E(\Gamma_1)@>>> E(\Gamma_1\cup \Gamma_2)\\
@AAA @AAA\\
E(\Gamma_1\cap\Gamma_2) @>>> E(\Gamma_2).
\end{CD}
$$
\end{lem}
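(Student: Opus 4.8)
The plan is to verify directly the three things that, together, characterize a push-out diagram according to the definition given above: first, that $E(\Gamma_1)$ and $E(\Gamma_2)$ generate $E(\Gamma_1\cup\Gamma_2)$; second, that $E(\Gamma_1)\cap E(\Gamma_2) = E(\Gamma_1\cap\Gamma_2)$; and third, that $E(\Gamma_1)$ and $E(\Gamma_2)$ commute inside $E(\Gamma_1\cup\Gamma_2)$. The first point is immediate from the definition $E(\Gamma) = \langle S_i : i\in\Gamma\rangle$, since $\langle S_i : i\in \Gamma_1\rangle \cup \langle S_i : i\in\Gamma_2\rangle$ generates $\langle S_i : i\in\Gamma_1\cup\Gamma_2\rangle$. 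So the content is in the other two points, and I expect the commuting condition to be the main obstacle.

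For the intersection, the inclusion $E(\Gamma_1\cap\Gamma_2)\subseteq E(\Gamma_1)\cap E(\Gamma_2)$ is trivial. For the reverse inclusion I would argue by induction along the original tower $\{B_\alpha\}_{\alpha<\xi}$, or rather by a transfinite/ordinal induction that reduces the general case to the case where $\Gamma_1$ and $\Gamma_2$ differ from $\Gamma_1\cap\Gamma_2$ by adding a single index at a time. The key is that saturation of all three sets $\Gamma_1,\Gamma_2,\Gamma_1\cap\Gamma_2$ means that whenever we adjoin a new largest index $\gamma$ to one of the sets, the algebra $R_\gamma$ already lies in $E$ of the part below $\gamma$ that is common, so that the fresh push-out square at stage $\gamma$ (coming from condition 4 of the definition) has its left-bottom corner inside the already-controlled common part. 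An element of $E(\Gamma_1)\cap E(\Gamma_2)$ is built from finitely many of the $S_i$, so only finitely many indices are involved; peeling off the top index and using that at each stage we have a genuine push-out square (so that $B_{\alpha+1}$ decomposes as the free product of $B_\alpha$ and $S_\alpha$ amalgamated over $R_\alpha$, and intersections behave correctly), one descends to the base case.

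For the commuting condition, given $a_1\in E(\Gamma_1)$ and $a_2\in E(\Gamma_2)$ with $a_1\wedge a_2 = 0$, I must produce $b_1,b_2\in E(\Gamma_1)\cap E(\Gamma_2) = E(\Gamma_1\cap\Gamma_2)$ with $a_1\leq b_1$, $a_2\leq b_2$, $b_1\wedge b_2 = 0$ — equivalently, find $r\in E(\Gamma_1\cap\Gamma_2)$ with $a_1\leq r$ and $r\wedge a_2 = 0$, i.e.\ separate $a_1$ from $a_2$ inside the common subalgebra. Again only finitely many of the generating indices from $\Gamma_1$ and $\Gamma_2$ are used to express $a_1$ and $a_2$; I would induct on the top such index $\gamma$, say $\gamma\in\Gamma_1\setminus\Gamma_2$ (the case $\gamma\in\Gamma_1\cap\Gamma_2$ being handled by a smaller instance, and $\gamma\in\Gamma_2\setminus\Gamma_1$ symmetrically). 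At that top index we have the fresh push-out square $B_\gamma\to B_{\gamma+1}$, $R_\gamma\to S_\gamma$, and since $\Gamma_1$ is saturated, $R_\gamma\subset E(\Gamma_1\cap\gamma)$. Using that $B_\gamma$ and $S_\gamma$ commute over $R_\gamma$ inside $B_{\gamma+1}$, one can push the $S_\gamma$-contribution of $a_1$ down to an element of $R_\gamma\subset E((\Gamma_1\cap\Gamma_2)\cap\gamma)$ that still lies above $a_1$ and remains disjoint from $a_2$, thereby removing the index $\gamma$ and invoking the inductive hypothesis on fewer indices. The bookkeeping of which indices remain after this reduction, and checking that the reduced pair still has the product-zero property, is the delicate part; but structurally it is the same descent as in the proof that tightly $\sigma$-filtered algebras have no $\omega_2$-chains, applied locally.
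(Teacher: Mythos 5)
Your overall strategy is the paper's: reduce everything to the commuting condition, note that elements of $E(\Gamma)$ only involve finitely many generating stages, and induct on the largest stage $\zeta$ involved, using the push-out square at stage $\zeta$ together with saturation to eliminate that stage. One economy you miss: the equality $E(\Gamma_1)\cap E(\Gamma_2)=E(\Gamma_1\cap\Gamma_2)$ needs no separate induction, since it follows from the commuting condition applied to $x$ and $\bar{x}$ for $x$ in the intersection (if $x\leq r$, $\bar{x}\leq s$, $r\wedge s=0$ with $r,s\in E(\Gamma_1\cap\Gamma_2)$, then $x=r$). More importantly, in the case where the top stage $\zeta$ lies in only one of the two sets, say only $a_1$ involves $S_\zeta$ with $\zeta\in\Gamma_1\setminus\Gamma_2$, saturation of $\Gamma_1$ gives only $R_\zeta\subset E(\Gamma_1\cap\zeta)$; your claim that the replacement lands in $E((\Gamma_1\cap\Gamma_2)\cap\zeta)$ is false in general, because $\zeta\notin\Gamma_2$ gives no control of $R_\zeta$ relative to $\Gamma_2$. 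The correct move (the paper's Case 2, with roles swapped) is to replace the $S_\zeta$-parts by elements of $R_\zeta\subset E(\Gamma_1\cap\zeta)$ so as to obtain $a_1\leq a_1'\in E(\Gamma_1\cap\zeta)$ still disjoint from $a_2$, and only then invoke the inductive hypothesis; landing in $E(\Gamma_1\cap\Gamma_2)$ happens later in the descent, not at this step.

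The genuine gap is the case $\zeta\in\Gamma_1\cap\Gamma_2$ with both $a_1$ and $a_2$ involving $S_\zeta$, which you dismiss as ``handled by a smaller instance.'' It is not: neither element lies in a subalgebra indexed below $\zeta$, so there is no smaller instance of the same statement to quote, and this is in fact the harder case in the paper's proof (Case 1). Writing $a_k=\bigvee_i v_i^k\wedge w_i^k$ with $v_i^k\in E(\Gamma_k\cap\zeta)$ and $w_i^k\in S_\zeta$, one uses the push-out at stage $\zeta$ to find $r_{ij}\in R_\zeta$ with $w_i^1\wedge w_j^2\leq r_{ij}$ and $v_i^1\wedge v_j^2\wedge r_{ij}=0$, applies the inductive hypothesis to the pairs $v_i^1$ and $v_j^2\wedge r_{ij}$ (which do live below $\zeta$, using $R_\zeta\subset E(\Gamma_2\cap\zeta)$ by saturation), and then reassembles separating elements that still carry the factors $w_i^k$. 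That reassembly is legitimate only because $\zeta\in\Gamma_1\cap\Gamma_2$ forces $S_\zeta\subset E(\Gamma_1\cap\Gamma_2)$, so the $S_\zeta$-contributions are allowed to survive into the answer. Without this observation the $S_\zeta$-parts have nowhere to go and the induction does not close; your sketch is missing exactly this idea.
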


\begin{proof}
We must proof that if $x\in E(\Gamma_1)$ and $y\in E(\Gamma_2)$ are such that $x\wedge y = 0$, then there exist $r,s\in E(\Gamma_1\cap \Gamma_2)$ such that $x\leq r$, $y\leq s$ and $r\wedge s = 0$. Notice that this already implies that $E(\Gamma_1\cap \Gamma_2) = E(\Gamma_1)\cap E(\Gamma_2)$, by taking $x$ and its complement $y=\bar{x}$ in the intersection. It is obvious that $\langle E(\Gamma_1)\cup E(\Gamma_2)\rangle = E(\Gamma_1\cup \Gamma_2).$ Let
$$\zeta' = \min \left\{\alpha : x\in E(\Gamma_1\cap \alpha) \text{ and } y\in E(\Gamma_2\cap \alpha)\right\}.$$

We prove the lemma by induction on $\zeta'$. Since every element in $E(\Gamma)$ must belong to $E(F)$ for some finite $F\subset \Gamma$, we see that $\zeta' = \zeta+1$ must be a successor ordinal. We can write $x$ and $y$ in the form
$$ x = \bigvee_{i<n} v^1_i \wedge w^1_i$$ $$y = \bigvee_{j<m} v^2_j \wedge w^2_j$$
with $v^k_i \in E(\Gamma_k \cap \zeta)$ and $w^k_i \in S_\zeta$. We must have that either $x\not\in E(\Gamma_1\cap \zeta)$ or $y\not\in E(\Gamma_2\cap \zeta)$. Withouth loss of generality, we assume that $y\not\in E(\Gamma_2\cap \zeta)$, which leaves two cases:

Case 1: When $x\not\in E(\Gamma_1\cap \zeta)$ and $y\not\in E(\Gamma_2\cap \zeta)$. This implies that $\zeta\in\Gamma_1\cap\Gamma_2$. Since $x\wedge y = 0$ we have that $v_i^1\wedge w_i^1 \wedge v_j^2 \wedge w_j^2 = 0$ for every $i,j$. We have that $v_i^1\wedge v_j^2\in B_\zeta$ and $w^1_i\wedge w^2_j\in S_\zeta$. Since there is a push-out diagram of $B_\zeta$, $S_\zeta$ and $R_\zeta = B_\zeta\cap S_\zeta$ from the definition of tightly $\sigma$-filtered, there exists $r_{ij}\in R_\zeta$ such that $r_{ij}\geq w_i^1\wedge w_j^2$ and $v^1_i\wedge v^2_j\wedge r_{ij} = 0$. Since $\Gamma_2$ is saturated and $\zeta\in \Gamma_2$, we have $R_\zeta\subset E(\Gamma_2\cap \zeta)$. Therefore $v^1_i\in E(\Gamma_1\cap\zeta)$ and $ v^2_j\wedge r_{ij}\in E(\Gamma_2\cap\zeta)$ and $v^1_i\wedge v^2_j\wedge r_{ij} = 0$. So can apply the inductive hypothesis, and we find $u^1_{ij},u^2_{ij}\in E(\Gamma_1\cap\Gamma_2)$ such that $u^1_{ij}\geq v^1_i$, $u^2_{ij}\geq v^2_j\wedge r_{ij}$ and $u^1_{ij}\cap u^2_{ij} = 0$. Consider now
$$ x' = \bigvee_{i<n} \bigwedge_{j<m} u^1_{ij} \wedge w^1_i$$ $$y' = \bigvee_{j<m} \bigwedge_{i<n} (u^2_{ij}\vee \overline{r_{ij}}) \wedge w^2_j$$
We have that $x'\geq x$, $y'\geq y$ and $x'\wedge y'  = 0$. Moreover $u_{ij}^k\in E(\Gamma_1\cap \Gamma_2)$, $w^k_i\in S_\zeta$ and $r_{ij}\in R_\zeta\subset S_\zeta$. Since in this case $\zeta\in \Gamma_1\cap \Gamma_2$, we have that $S_\zeta\subset E(\Gamma_1\cap\Gamma_2)$. Therefore, $x',y'\in E(\Gamma_1\cap\Gamma_2)$ and we are done.

Case 2:  When $x\in E(\Gamma_1\cap \zeta)$. For every $j<m$ we have that $x\wedge v^2_j\wedge w^2_j = 0$. Now, $x\wedge v^2_j\in B_\zeta$ and $w^2_j\in S_\zeta$, so we can find $r_j,s_j\in R_\zeta$ such that $x\wedge v^2_j\leq r_j$, $w^2_j\leq s_j$ and $r_j\wedge s_j = 0$. Consider $y' = \bigvee_{j<m} v^2_j\wedge s_j$. On the one hand, since $\Gamma_2$ is saturated, we have that $s_j\in R_\zeta\subset E(\Gamma_2\cap \zeta)$, so $y'\in E(\Gamma_2\cap \zeta)$. On the other hand, $y\leq y'$ and $x\wedge y'=0$. We can apply the inductive hypothesis and we are done. 
 
\end{proof}

\begin{lem}\label{artihmetics}
Let $P(x_1,\ldots,x_n)$ be a Boolean polynomial on $n$ variables. Then,
\begin{enumerate}
\item There exist two Boolean polynomials $P^-$ and $P^+$ on $n-1$ variables such that the equation $P(x_1,\ldots,x_n)=0$ is equivalent to
$$P^-(x_1,\ldots,x_{n-1})\leq x_n \leq P^+(x_1,\ldots,x_{n-1})$$
\item Suppose that we have elements in a Boolean algebra $r_i^-\leq a_i \leq r_i^+$ for $i=1,\ldots,n$. If $P(r^{\varepsilon_1}_1,\ldots,r^{\varepsilon_n}_n) =  0$ for any choice of signs $\varepsilon_i\in \{+,-\}$, then $P(a_1,\ldots,a_n)=0$.
\end{enumerate} 
\end{lem}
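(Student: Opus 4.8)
The plan is to establish (1) by a Shannon (cofactor) expansion along the last variable, and then to deduce (2) from (1) by peeling off the coordinates one at a time, using (1) to reduce each step to the only configuration in which monotonicity is available — an order interval $P^-\le x\le P^+$ constraining a single variable.

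\emph{Part (1).} I would start from the identity, valid for every Boolean polynomial,
$$P(x_1,\ldots,x_n) = \bigl(x_n\wedge P(x_1,\ldots,x_{n-1},1)\bigr)\vee\bigl(\overline{x_n}\wedge P(x_1,\ldots,x_{n-1},0)\bigr),$$
which one checks on $\{0,1\}$-valuations (these determine identities between Boolean polynomials). Writing $A=P(x_1,\ldots,x_{n-1},1)$ and $B=P(x_1,\ldots,x_{n-1},0)$ — Boolean polynomials in the $n-1$ variables $x_1,\ldots,x_{n-1}$ — the equation $P=0$ holds if and only if both disjuncts vanish, i.e. $x_n\wedge A=0$ and $\overline{x_n}\wedge B=0$, i.e. $B\le x_n\le\overline{A}$. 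So $P=0$ is equivalent to $P^-(x_1,\ldots,x_{n-1})\le x_n\le P^+(x_1,\ldots,x_{n-1})$ with $P^-:=P(x_1,\ldots,x_{n-1},0)$ and $P^+:=\overline{P(x_1,\ldots,x_{n-1},1)}$. The same works along any fixed variable, not only the last, which is what I will use below.

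\emph{Part (2).} First I would isolate a one-variable interpolation step: if $P$ is a Boolean polynomial, $j$ a coordinate, $c_i$ ($i\neq j$) fixed elements of a Boolean algebra, and $r^-\le a\le r^+$, then $P(\ldots,r^-,\ldots)=P(\ldots,r^+,\ldots)=0$ (with the displayed entry in position $j$ and the $c_i$ in the others) implies $P(\ldots,a,\ldots)=0$. Indeed, (1) applied along $x_j$ gives polynomials $P^\pm$ with $P(\ldots,x_j,\ldots)=0\iff P^-(\vec c)\le x_j\le P^+(\vec c)$; the two hypotheses give $P^-(\vec c)\le r^-$ and $r^+\le P^+(\vec c)$, hence $P^-(\vec c)\le r^-\le a\le r^+\le P^+(\vec c)$, so $P(\ldots,a,\ldots)=0$. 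Now I would prove by induction on $k=0,1,\ldots,n$ that $P(a_1,\ldots,a_k,r_{k+1}^{\varepsilon_{k+1}},\ldots,r_n^{\varepsilon_n})=0$ for every choice of signs $\varepsilon_{k+1},\ldots,\varepsilon_n$: the case $k=0$ is exactly the hypothesis of (2); for the step $k-1\to k$, fix $\varepsilon_{k+1},\ldots,\varepsilon_n$, apply the case $k-1$ with $\varepsilon_k=-$ and with $\varepsilon_k=+$ to get that $P$ vanishes at the two points having $r_k^-$, resp. $r_k^+$, in coordinate $k$, the $a_i$ in coordinates $<k$ and the $r_i^{\varepsilon_i}$ in coordinates $>k$, and then the one-variable step in coordinate $j=k$ (using $r_k^-\le a_k\le r_k^+$) yields vanishing with $a_k$ in coordinate $k$. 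The case $k=n$ is the desired conclusion $P(a_1,\ldots,a_n)=0$.

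The one genuine obstacle is that Boolean polynomials are not monotone, so one cannot simply interpolate all the coordinates at once; part (1) is precisely what circumvents this, by converting the constraint in one variable into an order interval where interpolation is immediate, after which the induction merely does the bookkeeping of liberating the coordinates one at a time.
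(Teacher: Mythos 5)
Your proof is correct. The paper offers no argument here beyond the remark ``elementary arithmetics of Boolean algebras,'' and your Shannon-expansion for part (1) (giving $P^-=P(x_1,\ldots,x_{n-1},0)$ and $P^+=\overline{P(x_1,\ldots,x_{n-1},1)}$) together with the coordinate-by-coordinate interpolation for part (2) is precisely the standard way to supply the missing details.
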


\begin{proof}
Elementary arithmetics of Boolean algebras.
\end{proof}

\begin{lem}\label{multipushout}
Let $\Delta,\Gamma_1,\ldots,\Gamma_n$ be saturated subsets of $\xi$ such that $\Gamma_i\cap \Gamma_j = \Delta$ for all $i\neq j$. Then, whenever we have elements $a_i\in E(\Gamma_i)$ that satisfy a Boolean equation $P(a_1,\ldots,a_n)=0$, then there exist elements $r^-_i\leq a_i \leq r^+_i$ such that $r^-_i,r^+_i\in E(\Delta)$ and $P(r^{\varepsilon_1}_1,\ldots,r^{\varepsilon_n}_n) =  0$ for any choice of signs $\varepsilon_i\in \{+,-\}$. 
\end{lem}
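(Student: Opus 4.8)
The plan is to reduce the multi-variable statement to the two-variable push-out decomposition from Lemma \ref{pushoutlemma} by peeling off one $\Gamma_i$ at a time, and then to apply the arithmetic of Lemma \ref{artihmetics}(1) inductively to convert each $\leq$-interval bound that the push-out produces into the two-sided bounds $r_i^-\leq a_i\leq r_i^+$ required by the statement. First I would fix the index $n$ and argue by induction on $n$. For $n=1$ there is nothing to prove (take $r_1^-=a_1=r_1^+$, noting $\Gamma_1=\Delta$ is vacuously the base case, or simply observe $a_1\in E(\Gamma_1)$ already satisfies $P(a_1)=0$). For the inductive step, set $\Gamma=\Gamma_1\cup\cdots\cup\Gamma_{n-1}$ and observe that $\Gamma$ is saturated (a union of saturated sets is saturated, since the condition $R_\gamma\subset E(\Sigma\cap\gamma)$ only grows as $\Sigma$ grows) and that $\Gamma\cap\Gamma_n=\Delta$: indeed $\Gamma\cap\Gamma_n=\bigcup_{i<n}(\Gamma_i\cap\Gamma_n)=\bigcup_{i<n}\Delta=\Delta$. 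Since $\Delta$ is also saturated, Lemma \ref{pushoutlemma} applies to the pair $(\Gamma,\Gamma_n)$.

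Next I would use Lemma \ref{artihmetics}(1) to write the equation $P(a_1,\ldots,a_n)=0$, viewed as an equation in the single variable $a_n$ with the other entries as parameters, in the equivalent form $P^-(a_1,\ldots,a_{n-1})\leq a_n\leq P^+(a_1,\ldots,a_{n-1})$. Here $b^-:=P^-(a_1,\ldots,a_{n-1})$ and $b^+:=P^+(a_1,\ldots,a_{n-1})$ lie in $E(\Gamma)$ because each $a_i\in E(\Gamma_i)\subset E(\Gamma)$ for $i<n$, and $a_n\in E(\Gamma_n)$. So $b^-\leq a_n$ and $b^+\geq a_n$ are two instances of the ordering relation across the push-out square of Lemma \ref{pushoutlemma} (using $b^-\wedge\overline{a_n}=0$ and $a_n\wedge\overline{b^+}=0$ together with the characterization of commuting subalgebras: for $c_1\leq c_2$ with $c_1\in E(\Gamma)$, $c_2\in E(\Gamma_n)$ there is an interpolant in $E(\Gamma\cap\Gamma_n)=E(\Delta)$). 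This yields $r_n^-,r_n^+\in E(\Delta)$ with $b^-\leq r_n^-\leq a_n$ and $a_n\leq r_n^+\leq b^+$. By the equivalence in Lemma \ref{artihmetics}(1) read backwards, $P(a_1,\ldots,a_{n-1},r_n^{\varepsilon_n})=0$ for both $\varepsilon_n\in\{+,-\}$.

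Now for each fixed $\varepsilon_n$ I have a Boolean equation $Q_{\varepsilon_n}(a_1,\ldots,a_{n-1}):=P(a_1,\ldots,a_{n-1},r_n^{\varepsilon_n})=0$ — note $r_n^{\varepsilon_n}\in E(\Delta)\subset E(\Gamma_i)$ for every $i<n$, so this is a genuine Boolean polynomial equation in $a_1,\ldots,a_{n-1}$ with $a_i\in E(\Gamma_i)$ and the sets $\Gamma_1,\ldots,\Gamma_{n-1}$ still pairwise intersecting in $\Delta$ and all saturated. Applying the inductive hypothesis to each of the two equations $Q_+ =0$ and $Q_-=0$ and then intersecting/unioning the resulting bounds (replace $r_i^-$ by the join of the two lower bounds obtained, $r_i^+$ by the meet of the two upper bounds; these still lie in $E(\Delta)$ and still sandwich $a_i$), I get $r_i^-\leq a_i\leq r_i^+$ in $E(\Delta)$ for $i<n$ such that $Q_{\varepsilon_n}(r_1^{\varepsilon_1},\ldots,r_{n-1}^{\varepsilon_{n-1}})=0$ for all sign choices $\varepsilon_1,\ldots,\varepsilon_{n-1}$ and both values of $\varepsilon_n$; that is precisely $P(r_1^{\varepsilon_1},\ldots,r_n^{\varepsilon_n})=0$ for all choices of signs, completing the induction.

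The step I expect to be the main obstacle is the bookkeeping in the inductive step: one must be careful that passing from the $n$-variable equation to the $(n-1)$-variable equations $Q_+=0,Q_-=0$ genuinely reduces to the hypothesis of the lemma (which it does, since $r_n^{\varepsilon_n}$ became a constant in $E(\Delta)$, absorbable into every $E(\Gamma_i)$), and that when one later combines the lower and upper bounds coming from the two sub-applications one does not destroy membership in $E(\Delta)$ or the sandwiching — both are fine because $E(\Delta)$ is a subalgebra, closed under $\vee$ and $\wedge$. A secondary point worth stating explicitly is the verification that $\bigcup_{i<n}\Gamma_i$ is saturated and meets $\Gamma_n$ exactly in $\Delta$, which is the only place the pairwise-intersection hypothesis $\Gamma_i\cap\Gamma_j=\Delta$ is used in full.
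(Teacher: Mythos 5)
Your proof is correct and follows essentially the same route as the paper's: both peel off one variable at a time via the two-polynomial decomposition $P(x_1,\ldots,x_n)=0 \Leftrightarrow P^-\leq x_k\leq P^+$ of Lemma~\ref{artihmetics} and then interpolate across the push-out of Lemma~\ref{pushoutlemma} applied to $\Gamma_k$ versus the (saturated) union of the remaining $\Gamma_j$'s. The only organizational difference is that the paper runs a finite induction on the index $k$ inside a fixed $n$, keeping $a_{k+1},\ldots,a_n$ as live arguments of $P$, which sidesteps the one technical wrinkle in your version, namely that $Q_{\varepsilon_n}$ is a polynomial with a constant $r_n^{\varepsilon_n}\in E(\Delta)$, so your induction formally requires strengthening the lemma to polynomials with coefficients from $E(\Delta)$ (harmless, but worth stating).
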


\begin{proof}
For every $k\in\{1,\ldots,n\}$ we can find two Boolean polynomials $P_{k}^-$ and $P_k^+$ on $n-1$ variables, such that the equation $P(x_1,\ldots,x_n) = 0 $ can be equivalently rewriten as
$$P^-_k(x_1,\ldots,x_{k-1},x_{k+1},\ldots,x_n) \leq x_k \leq P^+_k(x_1,\ldots,x_{k-1},x_{k+1},\ldots,x_n).$$
We choose the elements $r_k^+,r_k^-\in E(\Delta)$ by induction on $k$ with the property that $r_k^-\leq a_k\leq r_k^+$ and $P(r^{\varepsilon_1}_1,\ldots,r^{\varepsilon_k}_k,a_{k+1},\ldots,a_n) =  0$ for any choice of signs $\varepsilon_i$. Suppose that we have already chosen $r_1^-,r_1^+,\ldots,r_{k-1}^-,r_{k-1}^+$. Then we have
$$\bigvee_{\varepsilon_i\in \{+,-\}} P^-_k(r^{\varepsilon_1}_1,\ldots,r^{\varepsilon_{k-1}}_{k-1},a_{k+1},\ldots,a_n)  $$ $$ \leq a_k \leq $$ $$ \bigwedge_{\varepsilon_i\in\{+,-\}} P^+_k(r^{\varepsilon_1}_1,\ldots,r^{\varepsilon_{k-1}}_{k-1},a_{k+1},\ldots,a_n)$$
The lower and upper bounds in this chain of inequalities belong to $E(\Gamma_{k+1}\cup\cdots\cup \Gamma_n)$ while the middle term $a_k$ belongs to $E(\Gamma_k)$. Since $\Gamma_k \cap (\Gamma_{k+1}\cup\cdots\cup \Gamma_n) = \Delta$, we can apply Lemma~\ref{pushoutlemma} that implies the existence of $r_k^-,r_k^+\in E(\Delta)$ such that
$$\bigvee_{\varepsilon_i\in \{+,-\}} P^-_k(r^{\varepsilon_1}_1,\ldots,r^{\varepsilon_{k-1}}_{k-1},a_{k+1},\ldots,a_n) \leq $$ $$r_k^-\leq a_k\leq r_k^+$$ $$ \leq \bigwedge_{\varepsilon_i\in\{+,-\}} P^+_k(r^{\varepsilon_1}_1,\ldots,r^{\varepsilon_{k-1}}_{k-1},a_{k+1},\ldots,a_n).$$
\end{proof}

\section{A principle of symmetry in $\mathfrak{B}$}

Remember the standard notation that $[A]^\kappa$ denotes the family of all subsets of $A$ of cardinality $\kappa$. 

\begin{lem}\label{Deltasystem}
Let $\mathcal{F}$ be a family of countable sets of cardinality $\omega_2$. Then there exists  $\mathcal{G}\in [\mathcal{F}]^{\omega_2}$  and a countable set $\Delta$ such that the sets $\{A\setminus \Delta : A\in \mathcal{G}\}$ are pairwise disjoint.
\end{lem}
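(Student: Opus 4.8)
The plan is to dispose of an easy ``bounded'' case by a cardinality reduction, and otherwise to run a pressing‑down argument along a carefully chosen $\omega_2$‑sequence in $\mathcal{F}$, finishing by shrinking an $\omega_1$‑sized barrier down to a genuinely countable root. Throughout I would assume harmlessly that $\bigcup\mathcal{F}\subseteq\omega_2$, since this union has size at most $\omega_2$, so that every $A\in\mathcal{F}$ is a countable set of ordinals with $\sup A<\omega_2$.

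The first ingredient I would record is a \emph{small‑support reduction}: if $\mathcal{H}$ is a family of $\omega_2$‑many countable subsets of a set $X$ with $|X|\le\omega_1$, then there are a countable $c\subseteq X$ and $\mathcal{H}'\in[\mathcal{H}]^{\omega_2}$ with $A\subseteq c$ for all $A\in\mathcal{H}'$. To see this, fix a bijection of $X$ with $\omega_1$ (or with $\omega$), send each $A$ to the supremum of its image, and use $\mathrm{cf}(\omega_1)=\omega_1<\omega_2$ to find $\eta<\omega_1$ for which $\omega_2$‑many images are bounded by $\eta$; then $c$ is the preimage of $\eta$. In particular, if there is $\delta<\omega_2$ with $\omega_2$‑many $A\in\mathcal{F}$ satisfying $A\subseteq\delta$, we are already done: $|\delta|\le\omega_1$, so the reduction gives the required $\mathcal{G}$ with $\Delta=c$ and all traces empty.

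In the complementary case, for every $\delta<\omega_2$ only fewer than $\omega_2$ members of $\mathcal{F}$ are contained in $\delta$, so I would recursively pick distinct $A_\alpha\in\mathcal{F}$, $\alpha<\omega_2$, with $\sigma_\alpha:=\sup A_\alpha$ strictly increasing. Setting $\lambda_\alpha:=\sup_{\beta<\alpha}(\sigma_\beta+1)$, one gets $A_\beta\subseteq\lambda_\alpha$ whenever $\beta<\alpha$, pairwise disjoint ``slots'' $[\lambda_\alpha,\sigma_\alpha]$, $\lambda_\alpha\uparrow\omega_2$, and a splitting $A_\alpha=H_\alpha\cup T_\alpha$ into head $H_\alpha:=A_\alpha\cap\lambda_\alpha$ and tail $T_\alpha:=A_\alpha\setminus\lambda_\alpha$ sitting inside slot $\alpha$. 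For $\alpha<\alpha'$ we have $A_\alpha\subseteq\lambda_{\alpha'}$, hence $A_\alpha\cap T_{\alpha'}=\emptyset$ and $A_\alpha\cap A_{\alpha'}=A_\alpha\cap H_{\alpha'}$, so pairwise intersections are governed by the heads, \emph{provided} earlier tails are kept away from later heads. To arrange this I would press down: on the stationary set of $\alpha<\omega_2$ that are closure points of $\alpha\mapsto\sigma_\alpha$ and have $\mathrm{cf}(\alpha)=\omega_1$, one has $\lambda_\alpha\le\alpha$ and, $H_\alpha$ being countable, $\sup H_\alpha<\alpha$; Fodor's lemma then yields a fixed $\rho<\omega_2$ and a stationary $S$ with $H_\alpha\subseteq\rho$ for all $\alpha\in S$. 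Trimming $S$ to $S'=\{\alpha\in S:\lambda_\alpha>\rho\}$ (cofinite in $S$, since $\lambda_\alpha\uparrow\omega_2$) forces $H_{\alpha'}\cap T_\alpha=\emptyset$ for $\alpha<\alpha'$ in $S'$, whence $A_\alpha\cap A_{\alpha'}=H_\alpha\cap H_{\alpha'}\subseteq\rho$ for all distinct $\alpha,\alpha'\in S'$.

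To finish, note that $\{H_\alpha:\alpha\in S'\}$ is a family of $\omega_2$‑many countable subsets of $\rho$ with $|\rho|\le\omega_1$, so the small‑support reduction delivers a countable $\Delta\subseteq\rho$ and $S''\subseteq S'$ with $|S''|=\omega_2$ and $H_\alpha\subseteq\Delta$ for all $\alpha\in S''$; then $A_\alpha\cap A_{\alpha'}\subseteq\Delta$ for distinct $\alpha,\alpha'\in S''$, so $\mathcal{G}=\{A_\alpha:\alpha\in S''\}$ and $\Delta$ are as required. I expect the genuinely delicate points to be: (a) a naive pressing‑down only bounds the candidate root by an ordinal of size $\omega_1$, so one must invoke $\mathrm{cf}(\omega_2)>\omega_1$ a \emph{second} time to cut it down to countable — this is the role of the small‑support reduction being applied twice; and (b) the bookkeeping that forces the \emph{whole} intersection $A_\alpha\cap A_{\alpha'}$, not just its ``head part'', into $\Delta$, which is precisely what the slot/head/tail decomposition together with the trimming condition $\lambda_\alpha>\rho$ accomplishes.
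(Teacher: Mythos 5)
Your proof is correct and follows essentially the same route as the paper's: both dispose of the case where the family is supported on a set of size $\le\omega_1$ by a pigeonhole using the regularity of $\omega_2$, then in the main case press down (Fodor at cofinality-$\omega_1$ ordinals) to trap the ``initial parts'' below a fixed $\rho$ of size $\le\omega_1$, thin out so the ``tails'' become pairwise disjoint, and finally reapply the bounded case to shrink the root from size $\omega_1$ to countable. The only differences are cosmetic --- your explicit head/tail/slot bookkeeping versus the paper's construction of a subfamily $B$ with $F_\alpha\subset\beta$ for $\alpha<\beta$ --- so the two arguments are the same in substance.
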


\begin{proof}
Enumerate $\mathcal{F} = \{F_\alpha : \alpha<\omega_2\}$. Notice that $|\bigcup\mathcal{F}|\leq \omega\cdot\omega_2 = \omega_2$. Suppose first that $|\bigcup \mathcal{F}|<\omega_2$. Then we can suppose that all sets in $\mathcal{F}$ are subsets of the ordinal $\omega_1$ and we can write $\mathcal{F} = \bigcup_{\gamma<\omega_1} \{F\in\mathcal{F} : F\subset \gamma\}$. Since $|\mathcal{F}|=\omega_2$, there would exist $\gamma$ such that $|\mathcal{F}_\gamma| = \omega_2$, and then we can take $\mathcal{G} = \mathcal{F}_\gamma$ and $\Delta=\gamma$. This proves the lemma in the case when $|\bigcup\mathcal{F}|<\omega_2$, so we will suppose now that $|\bigcup\mathcal{G}|=\omega_2$ for all subfamilies $\mathcal{G}\in [\mathcal{F}]^{\omega_2}$. We also suppose that all sets in $\mathcal{F}$ are subsets of the ordinal $\omega_2$. For every $\alpha<\omega_2$ of cofinality $\omega_1$ choose an ordinal $\zeta(\alpha)<\alpha$ that is an upper bound of $F_\alpha\cap \alpha$. Applying the pressing down lemma, we obtain $\zeta<\omega_2$ and a set $A\in [ \omega_2]^{\omega_2}$ such that $\zeta(\alpha) = \zeta$ for all $\alpha\in A$. This implies that $F_\alpha\cap F_\beta \subset \zeta$ whenever $F_\alpha\subset \beta$. Since we are assuming that $|\bigcup\{F_\alpha : \alpha\in A\}|=\omega_2$, it is possible to construct by induction a subset $B\subset A$ of cardinality $\omega_2$ such that $F_\alpha\subset \beta$ for all $\alpha,\beta\in B$ with $\alpha<\beta$. With this we get $\{F_\alpha\setminus \zeta : \alpha\in A\}$ are pairwise disjoint. On the other hand, we already proved the case when the union has cardinality less than $\omega_2$, so we can find $C\subset B$ of cardinality $\omega_2$ and a countable $\Delta$ such that  $\{(F_\alpha\cap\zeta)\setminus\Delta : \alpha\in C\}$ are pairwise disjoint. The set $\Delta$ and the family $\mathcal{G} = \{F_\alpha : \alpha\in C\}$ were those that we were looking for.
\end{proof}

\begin{thm}\label{subfamilyP}
Let $\mathcal{H}$ be a family of countable subsets of $\mathfrak{B}$ of cardinality $\omega_2$. Then, there exists $\mathcal{H'}\in [\mathcal{H}]^{\omega_2}$  and a countable subalgebra $R\subset \mathfrak{B}$ such that for every different $H_1,\ldots,H_n\in\mathcal{H}'$ and any $a_i\in H_i$, if a Boolean equation $P(a_1,\ldots,a_n)=0$ is satisfied, then there exist $r^-_i,r^+_i\in R$ such that $r^-_i\leq a_i\leq r^+_i$ and $P(r^{\varepsilon_1}_1,\ldots,r^{\varepsilon_n}_n) =  0$ for any choice of signs $\varepsilon_i\in \{+,-\}$.
\end{thm}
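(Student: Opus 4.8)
The plan is to reduce the statement to Lemma~\ref{multipushout}: for a suitable subfamily $\mathcal{H}'$ we will attach to each $H\in\mathcal{H}'$ a countable saturated set of indices $\widetilde\Gamma_H\subset\xi$ with $H\subset E(\widetilde\Gamma_H)$, arranged so that all the pairwise intersections $\widetilde\Gamma_H\cap\widetilde\Gamma_{H'}$ coincide with one fixed countable saturated set $\Delta$. The algebra $R:=E(\Delta)$ will then do the job, since an application of Lemma~\ref{multipushout} to $\Gamma_i=\widetilde\Gamma_{H_i}$ and this $\Delta$ delivers precisely the elements $r_i^-\leq a_i\leq r_i^+$ in $E(\Delta)=R$ required in the conclusion.

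To build the $\widetilde\Gamma_H$, start by using Lemma~\ref{tightness} to choose, for each $H\in\mathcal{H}$, a countable saturated $\Gamma_H\subset\xi$ with $H\subset E(\Gamma_H)$. If $\omega_2$ many of the $H$ share the same value $\Gamma_H=\Gamma$, take $\mathcal{H}'$ to be this subfamily and $R=E(\Gamma)$: for every occurrence $a_i\in H_i\subset E(\Gamma)$ one may simply put $r_i^-=r_i^+=a_i$. Otherwise a cardinal computation ($\omega_1$ classes of size $\leq\omega_1$ could cover only $\omega_1$ elements) yields $\mathcal{H}_0\in[\mathcal{H}]^{\omega_2}$ on which $H\mapsto\Gamma_H$ is injective, so $\{\Gamma_H:H\in\mathcal{H}_0\}$ is a family of countable sets of size $\omega_2$. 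Apply Lemma~\ref{Deltasystem} to it and pull back: we obtain $\mathcal{H}_1\in[\mathcal{H}_0]^{\omega_2}$ and a countable $\Delta_0\subset\xi$ with the sets $\{\Gamma_H\setminus\Delta_0:H\in\mathcal{H}_1\}$ pairwise disjoint. Now fix a countable \emph{saturated} $\Delta\supset\Delta_0$ — such a set exists by the very construction used to prove Lemma~\ref{tightness}, iterating the operation $Z\mapsto Z\cup\bigcup_{\gamma\in Z}\Delta_\gamma$ (with $R_\gamma\subset E(\Delta_\gamma)$) starting from $\Delta_0$ and taking the union — and set $\widetilde\Gamma_H=\Gamma_H\cup\Delta$ for $H\in\mathcal{H}_1$. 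A union of saturated sets is saturated (if $\gamma$ lies in one of the two sets, the condition $R_\gamma\subset E(\Gamma\cap\gamma)$ only becomes easier when $\Gamma$ grows), so each $\widetilde\Gamma_H$ is saturated; and since $\Gamma_H\cap\Gamma_{H'}\subset\Delta_0\subset\Delta$ for $H\neq H'$ in $\mathcal{H}_1$, we get $\widetilde\Gamma_H\cap\widetilde\Gamma_{H'}=\Delta$ exactly.

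It remains to arrange that the $\widetilde\Gamma_H$ are pairwise distinct (otherwise the hypothesis of Lemma~\ref{multipushout} would force $\widetilde\Gamma_H=\Delta$): I would split once more. If $\omega_2$ many $H\in\mathcal{H}_1$ satisfy $\Gamma_H\setminus\Delta\neq\emptyset$, let $\mathcal{H}'$ consist of those; their ``new parts'' $\Gamma_H\setminus\Delta$ are nonempty and pairwise disjoint, so the $\widetilde\Gamma_H=(\Gamma_H\setminus\Delta)\cup\Delta$ are distinct. If instead only $<\omega_2$ such $H$ exist, then $\omega_2$ many $H\in\mathcal{H}_1$ have $\Gamma_H\subset\Delta$, hence $H\subset E(\Delta)$, and we finish trivially with $R=E(\Delta)$ exactly as in the first easy case. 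With $\mathcal{H}'$ and $R=E(\Delta)$ chosen this way, the theorem follows by applying Lemma~\ref{multipushout} to distinct $H_1,\dots,H_n\in\mathcal{H}'$, elements $a_i\in H_i\subset E(\Gamma_{H_i})\subset E(\widetilde\Gamma_{H_i})$ with $P(a_1,\dots,a_n)=0$, and the saturated set $\Delta$. The only step that is not pure bookkeeping is the passage from ``pairwise intersections $\subset\Delta_0$'', which is all that the $\Delta$-system Lemma~\ref{Deltasystem} provides, to ``pairwise intersections \emph{equal} to a fixed \emph{saturated} set''; this is what forces both the enlargement of each $\Gamma_H$ to $\Gamma_H\cup\Delta$ and the preliminary saturation of $\Delta_0$, and I expect checking these two features (saturatedness of the union, and exactness of the intersection) to be the crux of the argument.
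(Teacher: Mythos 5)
Your proposal is correct and follows essentially the same route as the paper's proof: choose saturated countable $\Gamma_H$ via Lemma~\ref{tightness}, apply the $\Delta$-system Lemma~\ref{Deltasystem}, enlarge to a saturated root $\Delta$ contained in each $\Gamma_H$, and invoke Lemma~\ref{multipushout} with $R=E(\Delta)$. The extra case analysis you carry out (injectivity of $H\mapsto\Gamma_H$, distinctness of the enlarged sets, the degenerate case $\Gamma_H\subset\Delta$) just makes explicit details the paper silently absorbs into ``we can suppose.''
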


\begin{proof}
Using Lemma~\ref{tightness}, for every $H\in \mathcal{H}$ choose a saturated countable set $\Gamma_H\subset \xi$ such that $H\subset E(\Gamma_H)$. Apply Lemma~\ref{Deltasystem} to pass to a subfamily of cardinality $\omega_2$ with a countable set $\Delta\subset\xi$ as in that lemma. By enlarging $\Delta$ and each $\Gamma_H$, we can suppose that $\Delta$ is saturated and $\Delta\subset \Gamma_H$. Then we apply Lemma~\ref{multipushout}.
\end{proof}

If $R$ is a Boolean algebra, we consider $G(R)$ to be the set of pairs $$ G(R) = \left\{ (I^-,I^+)\in 2^R \times 2^R : I^- \text{ is an ideal}, I^+ \text{ is a filter}, I^-\leq I^+\right\}.$$ 

Here $I^-\leq I^+$ means that $a\leq b$ whenever $a\in I^-$ and $b\in I^+$. The set $2^R$ is the family of all subsets of $R$, that is identified with the power space $\{0,1\}^R$ endowed with the product topology. When $R$ is countable, $2^R$ is homeomorphic to the Cantor set. Notice that $G(R)$ is a closed subset of $2^R\times 2^R$, so it is a compact space, which is moreover metrizable when $R$ is countable. 

Given a Boolean algebra $R$ and a Boolean polynomial $P(x_1,\ldots,x_n)$ on $n$ variables, we define the set $G_P(R)\subset G(R)^n$ as the set all $((I^-_1,I^+_1),\ldots,(I^-_n,I^+_n))\in G(R)^n$ such that there exist $r_1^-,r_1^+,\ldots,r_n^-,r_n^+$  such that $r_i^\varepsilon\in I_i^\varepsilon$ for all $i,\varepsilon$ and $P(r^{\varepsilon_1}_1,\ldots,r^{\varepsilon_n}_n) =  0$ for any choice of signs $\varepsilon_i\in \{+,-\}$. More generally, if $W$ is a set of coordinates and $w_1,\ldots,w_n\in W$, we define the set $G_P^W[w_1,\ldots,w_n](R)\subset G(R)^W$ as the set all 
$$((I^-_w(1),I^+_w(1)),\ldots,(I^-_w(n),I^+_w(n)))_{w\in W}\in (G(R)^W)^n$$
 such that there exist $r_1^-,r_1^+,\ldots,r_n^-,r_n^+$  such that $r_i^\varepsilon\in I_{w_i}^\varepsilon(i)$ for all $i,\varepsilon$ and also $P(r^{\varepsilon_1}_1,\ldots,r^{\varepsilon_n}_n) =  0$ for any choice of signs $\varepsilon_i\in \{+,-\}$. The following is an easy observation:

\begin{lem}
The set $G_P^W[w_1,\ldots,w_n](R)$ is an open subset of $(G(R)^W)^n$, and the set $G_P(R)$ is an open subset of $G(R)^n$.
\end{lem}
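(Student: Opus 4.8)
The plan is to show openness by exhibiting, around any point of the set, a basic open neighbourhood contained in it. Recall that the topology on $G(R)^W$ (and hence on $(G(R)^W)^n$) is the product topology coming from $2^R=\{0,1\}^R$, so a basic open set is determined by fixing, for finitely many coordinates $w\in W$, finitely many elements of $R$ that are required to be in $I_w^-(i)$ and finitely many required to be outside $I_w^-(i)$, and similarly for $I_w^+(i)$.

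First I would fix a point $((I^-_w(1),I^+_w(1)),\ldots,(I^-_w(n),I^+_w(n)))_{w\in W}$ in $G_P^W[w_1,\ldots,w_n](R)$ and take the witnesses $r_1^-,r_1^+,\ldots,r_n^-,r_n^+$ guaranteed by the definition, so that $r_i^\varepsilon\in I_{w_i}^\varepsilon(i)$ for all $i$ and $\varepsilon$, and $P(r^{\varepsilon_1}_1,\ldots,r^{\varepsilon_n}_n)=0$ for every choice of signs. The key observation is that this last property involves only the finitely many fixed elements $r_i^\varepsilon$ of $R$ and nothing else. So I would define the basic open neighbourhood $U$ of the given point to be the set of all $((J^-_w(1),J^+_w(1)),\ldots)_{w\in W}\in (G(R)^W)^n$ for which $r_i^-\in J^-_{w_i}(i)$ and $r_i^+\in J^+_{w_i}(i)$ for all $i$ — that is, we only impose the membership conditions on the already-chosen witnesses, at the already-chosen coordinates $w_i$, and impose nothing else. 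This is a finite list of conditions of the form ``a fixed element of $R$ lies in a fixed coordinate set'', hence $U$ is open in the product topology. Every point of $U$ is in $G_P^W[w_1,\ldots,w_n](R)$ because the very same tuple $r_1^-,\ldots,r_n^+$ witnesses membership (the Boolean equations $P(r^{\varepsilon_1}_1,\ldots,r^{\varepsilon_n}_n)=0$ continue to hold, as they do not depend on the point at all). Thus $G_P^W[w_1,\ldots,w_n](R)$ is open, and the statement for $G_P(R)$ is the special case $W=\{1,\ldots,n\}$ with $w_i=i$.

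There is essentially no obstacle here; the only point requiring the tiniest care is that the conditions $r_i^-\in J^-_{w_i}(i)$, being ``membership in an open-closed coordinate'', genuinely define an open subset of $\{0,1\}^R$ in each relevant factor (indeed a clopen one), and that we are intersecting only finitely many such conditions, so the result is open. One should also note that $G(R)$ itself is closed in $2^R\times 2^R$ (already remarked in the text), so $U\cap G_P^W[w_1,\ldots,w_n](R)$ is being computed inside the subspace $(G(R)^W)^n$, but this changes nothing: relative openness is all that is claimed and all that is used. Hence the lemma follows immediately.
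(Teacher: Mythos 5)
Your argument is correct: the paper offers no proof at all (it labels the lemma ``an easy observation''), and the witness-based argument you give --- impose only the finitely many membership conditions $r_i^\varepsilon\in J^\varepsilon_{w_i}(i)$, note that these cut out a basic open set, and observe that the same witnesses certify membership throughout that set --- is exactly the routine verification being left to the reader. One cosmetic point: $G_P(R)$ is not literally the special case $W=\{1,\ldots,n\}$ with $w_i=i$ (that set would live in $(G(R)^n)^n$ rather than $G(R)^n$); it is rather the case where $W$ is a singleton, or more simply the same argument run directly in $G(R)^n$ --- but this does not affect the correctness of anything.
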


\begin{thm}\label{omegathm}
Let $\mathcal{H}$ be a family of countable subsets of $\mathfrak{B}$ of cardinality $\omega_2$, and let $P(x_1,\ldots,x_n)$ be a Boolean polynomial. Then,
\begin{enumerate}
\item either there exists $\mathcal{H}_0 \in [\mathcal{H}]^{\omega_2}$  such that  $P(a_1,\ldots,a_n) \neq 0$ whenever we have different $H_1,\ldots,H_n\in \mathcal{H}_0$ and $a_i\in H_{i}$,
\item or there exist $\mathcal{H}_1,\ldots,\mathcal{H}_n\in [\mathcal{H}]^{\omega_2}$  such that for every $H_1\in\mathcal{H}_1,\ldots,H_n\in\mathcal{H}_n$ there exist $a_i\in H_i$ such that $P(a_1,\ldots,a_n) = 0$.
\end{enumerate}
\end{thm}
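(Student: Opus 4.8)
The plan is to turn the statement into a topological dichotomy inside the compact metrizable space $G(R)^{\omega}$, and then read off the two alternatives from a condensation‑point argument. To set up, apply Theorem~\ref{subfamilyP} to $\mathcal{H}$ to obtain $\mathcal{H}'\in[\mathcal{H}]^{\omega_2}$ and a countable subalgebra $R\subset\mathfrak{B}$ with the approximation property stated there. For $a\in\mathfrak{B}$ put $\gamma(a)=(\{r\in R:r\le a\},\{r\in R:r\ge a\})$: the first set is an ideal of $R$, the second a filter, and they are $\le$‑related, so $\gamma(a)\in G(R)$. Discarding $\emptyset$ from $\mathcal{H}$ if necessary, fix for each $H\in\mathcal{H}'$ a surjection $\omega\to H$, $k\mapsto a^H_k$, and set $g_H=(\gamma(a^H_k))_{k\in\omega}\in G(R)^{\omega}$. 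Since $R$ is countable, $G(R)$ is compact metrizable, hence so is the countable power $Z:=G(R)^{\omega}$; in particular $Z$ is second countable.

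Next, pass to condensation points. Call $H\in\mathcal{H}'$ \emph{generic} if $|\{H'\in\mathcal{H}':g_{H'}\in U\}|=\omega_2$ for every open $U\ni g_H$. Each non‑generic $H$ is witnessed by a member of a fixed countable base of $Z$ containing at most $\omega_1$ of the points $g_{H'}$, so there are at most $\omega\cdot\omega_1=\omega_1$ non‑generic $H$; as $\omega_2$ is regular, $\mathcal{H}'':=\{H\in\mathcal{H}':H\ \text{generic}\}$ has size $\omega_2$, and since we removed $\le\omega_1$ sets, each remaining $g_H$ is still a condensation point of the sub‑family $(g_{H'})_{H'\in\mathcal{H}''}$. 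Finally let $O:=\bigcup_{(k_1,\dots,k_n)\in\omega^{n}}G^{\omega}_P[k_1,\dots,k_n](R)\subset(G(R)^{\omega})^{n}$, which is open by the lemma preceding this theorem.

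Now run the dichotomy on whether or not $O$ meets $C^{n}$, where $C=\{g_H:H\in\mathcal{H}''\}$. Suppose first there are $H_1,\dots,H_n\in\mathcal{H}''$ with $(g_{H_1},\dots,g_{H_n})\in O$. Pick open $V_i\ni g_{H_i}$ with $V_1\times\cdots\times V_n\subset O$ and put $\mathcal{H}_i=\{H\in\mathcal{H}'':g_H\in V_i\}$, which lies in $[\mathcal{H}]^{\omega_2}$ by genericity. Given any $H_i'\in\mathcal{H}_i$ we get $(g_{H_1'},\dots,g_{H_n'})\in G^{\omega}_P[k_1,\dots,k_n](R)$ for suitable $k_i\in\omega$; unwinding the definition of $G^{\omega}_P[\,\cdot\,]$ this yields $r^-_i,r^+_i\in R$ with $r^-_i\le a^{H_i'}_{k_i}\le r^+_i$ and $P(r^{\varepsilon_1}_1,\dots,r^{\varepsilon_n}_n)=0$ for all sign choices, whence $P(a^{H_1'}_{k_1},\dots,a^{H_n'}_{k_n})=0$ by Lemma~\ref{artihmetics}(2); this is alternative (2) with $a_i=a^{H_i'}_{k_i}\in H_i'$. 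If instead $O\cap C^{n}=\emptyset$, take $\mathcal{H}_0=\mathcal{H}''$: were $P(a_1,\dots,a_n)=0$ for distinct $H_1,\dots,H_n\in\mathcal{H}_0$ and $a_i=a^{H_i}_{k_i}\in H_i$, then Theorem~\ref{subfamilyP} (applicable since $\mathcal{H}''\subset\mathcal{H}'$) would give $r^\pm_i\in R$ with $r^-_i\le a_i\le r^+_i$ and $P(r^{\varepsilon_1}_1,\dots,r^{\varepsilon_n}_n)=0$ for all signs, i.e. $(g_{H_1},\dots,g_{H_n})\in G^{\omega}_P[k_1,\dots,k_n](R)\subset O$, contradicting $O\cap C^{n}=\emptyset$; hence $P(a_1,\dots,a_n)\ne0$ always, which is alternative (1).

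I expect the delicate point to be arranging the encoding $H\mapsto g_H$ so that membership of $(g_{H_1},\dots,g_{H_n})$ in the open sets $G^{\omega}_P[\,\cdot\,](R)$ corresponds \emph{exactly} to the existence of $R$‑approximations $r^\pm_i$ of selected elements $a_i\in H_i$ with $P(r^{\varepsilon_1}_1,\dots,r^{\varepsilon_n}_n)=0$ — that is, making Theorem~\ref{subfamilyP} and the definitions of $G(R)$ and $G^{\omega}_P[\,\cdot\,](R)$ dovetail in both directions. A secondary care point is the cardinal arithmetic in the condensation count, which uses that $\omega_2$ is regular and that $G(R)^{\omega}$ is second countable. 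Everything else is bookkeeping with the push‑out and Boolean‑polynomial lemmas already established.
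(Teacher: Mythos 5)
Your proposal is correct and follows essentially the same route as the paper: reduce via Theorem~\ref{subfamilyP} to a countable algebra $R$, encode elements by their ideal/filter of $R$-approximations in the compact metrizable space $G(R)$, discard $\leq\omega_1$ many non-condensation members using second countability, and propagate a single witness through the open sets $G_P$. The only (harmless) difference is packaging: you enumerate each $H$ and work with the sets $G^{\omega}_P[k_1,\dots,k_n](R)$ in $G(R)^{\omega}$, whereas the paper tracks individual elements in $G(R)$ itself via the set $L$ and uses $G_P(R)\subset G(R)^n$.
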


\begin{proof}
We first use Theorem~\ref{subfamilyP}, so we can suppose that we have a countable subalgebra $R$ that satisfies the conclusion of that theorem for $\mathcal{H}' = \mathcal{H}$. 
For every $a\in \mathfrak{B}$, let $I^-(a) = \{r\in R : r\leq a\}$, $I^+(a) = \{r\in R: r\geq a\}$. Let $L$ be the compact subset of $G(R)$ obtained by removing those open subsets $\mathcal{U}$ of $G(R)$ such that  $\{H\in\mathcal{H} : \exists a\in H : (I^-(a),I^+(a))\in \mathcal{U}\}$ has cardinality at most $\omega_1$. Since $R$ is countable, $G(R)$ has a countable basis of open sets. There are at most $\omega_1$ many $H\in \mathcal{H}$ for which there exists $a\in H$ such that $(I^-(a),I^+(a))$ lives in one of the removed basic open sets. By removing those $\omega_1$ many $H$ we can also suppose that $(I^-(a),I^+(a))\in L$ for every $a\in \bigcup \mathcal{H}$. If $P(a_1,\ldots,a_n) \neq 0$ whenever we have different $H_1,\ldots,H_n\in \mathcal{H}$ and $a_i\in H_{i}$, then we are done. Otherwise, there exist different sets $H_1,\ldots,H_n\in\mathcal{H}$ and $a_i\in H_i$ such that $P(a_1,\ldots,a_n)=0$. Since we are assuming the conclusion of Theorem~\ref{subfamilyP}, we have then $r_i^\pm \in I^\pm(a_i)$ such that $P(r_1^\pm,\ldots,r_n^\pm)=0$ for all choice of signs. Hence 
$$ ( (I^-(a_1),I^+(a_1)),\ldots,(I^{-}(a_n),I^+(a_n))) \in G_P(R).$$
Since $G_P(R)$ is open, we can find open neighborhoods $U_i$ of $(I^-(a_1),I^+(a_1))$ in $G(R)$ such that $$ (\star)\ \ U_1\times\cdots\times U_n \subset G_P(R).$$
For every $i=1,\ldots,n$, $(I^-(a_i),I^+(a_i))\in L$, which means that there exist $\mathcal{H}_i\in [\mathcal{H}]^{\omega_2}$ such that for each $H\in \mathcal{H}_i$ there exists $a\in H$ such that $(I^-(a),I^+(a))\in U_i$. In that case, the families $\mathcal{H}_i$ satisfy the second case of the theorem, by $(\star)$, the definition of $G_P(R)$, and Lemma~\ref{artihmetics}. \end{proof}

\begin{cor}[Dow, Hart]
$\mathfrak{B}$ does not contain $\omega_2$-chains.
\end{cor}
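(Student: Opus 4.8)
The plan is to derive the corollary from Theorem~\ref{omegathm} by choosing the right Boolean polynomial $P$ and the right family $\mathcal{H}$, and then showing that the second alternative of the theorem is incompatible with the structure of a chain. Suppose toward a contradiction that $\{c_\alpha : \alpha<\omega_2\}$ is a strictly increasing chain in $\mathfrak{B}$. For each $\alpha$ set $H_\alpha = \{c_\alpha\}$ (a one-element, hence countable, subset of $\mathfrak{B}$), and let $\mathcal{H} = \{H_\alpha : \alpha<\omega_2\}$, a family of countable subsets of $\mathfrak{B}$ of cardinality $\omega_2$. Take the Boolean polynomial in two variables $P(x_1,x_2) = x_2 \wedge \overline{x_1}$, so that $P(a,b)=0$ is exactly the relation $b\leq a$.

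Now apply Theorem~\ref{omegathm} to this $\mathcal{H}$ and this $P$. Consider the first alternative: there is $\mathcal{H}_0\in[\mathcal{H}]^{\omega_2}$ such that $P(a_1,a_2)\neq 0$ for all distinct $H_1,H_2\in\mathcal{H}_0$ and $a_i\in H_i$. Writing $\mathcal{H}_0 = \{H_\alpha : \alpha\in A\}$ for some $A\in[\omega_2]^{\omega_2}$, and taking $\alpha<\beta$ in $A$, this says $c_\beta \wedge \overline{c_\alpha}\neq 0$, i.e. $c_\beta\not\leq c_\alpha$. But the chain is increasing, so for $\alpha<\beta$ we have $c_\alpha \leq c_\beta$, and the strictness gives $c_\alpha\neq c_\beta$, hence $c_\beta\not\leq c_\alpha$; so this alternative is actually consistent with the chain and gives no contradiction by itself. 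Therefore one should swap the roles: apply the theorem instead with $P(x_1,x_2) = x_1\wedge\overline{x_2}$, so that $P(a_1,a_2)=0$ means $a_1\leq a_2$. Then the first alternative yields $A\in[\omega_2]^{\omega_2}$ with $c_\alpha\not\leq c_\beta$ for all $\alpha<\beta$ in $A$, which directly contradicts that $\{c_\alpha\}$ is increasing (pick any $\alpha<\beta$ in $A$). So the first alternative is impossible.

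It remains to rule out the second alternative for this $P(x_1,x_2)=x_1\wedge\overline{x_2}$: there would exist $\mathcal{H}_1,\mathcal{H}_2\in[\mathcal{H}]^{\omega_2}$ such that for every $H_1\in\mathcal{H}_1$ and $H_2\in\mathcal{H}_2$ there are $a_i\in H_i$ with $P(a_1,a_2)=0$, i.e. $a_1\leq a_2$. Since each $H_\alpha$ is a singleton, this forces: for all $\alpha$ with $H_\alpha\in\mathcal{H}_1$ and all $\beta$ with $H_\beta\in\mathcal{H}_2$, we have $c_\alpha\leq c_\beta$. Writing $\mathcal{H}_1=\{H_\alpha:\alpha\in A_1\}$, $\mathcal{H}_2=\{H_\beta:\beta\in A_2\}$ with $A_1,A_2\in[\omega_2]^{\omega_2}$, pick any $\beta\in A_2$; since $A_1$ is cofinal in $\omega_2$, choose $\alpha\in A_1$ with $\alpha>\beta$. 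Then $c_\alpha\leq c_\beta$, but since $\alpha>\beta$ and the chain is increasing and strict, $c_\beta\leq c_\alpha$ and $c_\beta\neq c_\alpha$, so $c_\alpha\not\leq c_\beta$ — contradiction. Both alternatives fail, so no $\omega_2$-chain can exist in $\mathfrak{B}$.

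The only genuinely delicate point is the correct choice of orientation of $P$ and of which alternative to play against the chain; once that bookkeeping is fixed, everything reduces to the trivial observation that in a strictly increasing $\omega_2$-chain, for any two indices the larger index gives the larger (and strictly larger) element, which immediately collides with the conclusions supplied by both alternatives of Theorem~\ref{omegathm}. No further machinery is needed, since the theorem does all the combinatorial work; the corollary is essentially a one-line instantiation once $P$ is chosen, plus the elementary verification above.
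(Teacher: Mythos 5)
Your proof is correct and takes essentially the same route as the paper: the same polynomial $P(x_1,x_2)=x_1\wedge\overline{x_2}$ applied to the singleton family $H_\alpha=\{c_\alpha\}$, with alternative (1) of Theorem~\ref{omegathm} contradicting monotonicity of the chain and alternative (2) contradicting strictness via cofinality of the $\mathcal{G}_i$'s. (Your initial detour is inessential, and in fact your claim that the first orientation $x_2\wedge\overline{x_1}$ "gives no contradiction" is not quite right: alternative (1) quantifies over all ordered assignments of distinct $H_i$'s, so taking $H_1=H_\beta$, $H_2=H_\alpha$ with $\alpha<\beta$ would also have produced the contradiction there.)
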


\begin{proof}
Suppose $\{a_\alpha : \alpha<\omega_2\}$ is an $\omega_2$-chain in $\mathfrak{B}$. Apply Theorem~\ref{omegathm} for $H_\alpha = \{a_\alpha\}$ and $P(x_1,x_2) = x_1\setminus x_2$. Possibility (1) is impossible as it would imply that $a_{\alpha_1} \not< a_{\alpha_2}$ for some $\alpha_1<\alpha_2$. Possiblity (2) is impossible as well as it would imply that $a_{\alpha_1}\subset a_{\alpha_2}$ for some $\alpha_2>\alpha_1$.
\end{proof}

\section{A principiple of symmetry in $C(\mathfrak{K})$}

Now, let $\mathfrak{K}$ be the Stone space of the tightly $\sigma$-filtered Boolean algebra $\mathfrak{B}$, and $C(\mathfrak{K})$ the Banach space of continuous real-valued functions on $\mathfrak{K}$, endowed with the supremum norm. For convenience, we will consider that $\mathfrak{B}$ is the family of clopen subsets of $\mathfrak{K}$.

\begin{lem}\label{norminequality}
Let $\mathcal{F}$ be a family of countable subsets of $C(\mathfrak{K})$ of cardinality $\omega_2$, and let $\lambda_1,\cdots,\lambda_n,M$ be scalars. Suppose that for every $\mathcal{G}\in [\mathcal{F}]^{\omega_2}$ we can find different $F_1,\ldots,F_n\in \mathcal{G}$ and $f_i\in F_i$ such that $$\left\| \sum \lambda_i f_i \right\| < M.$$
Then, there exist $\mathcal{G}_1,\ldots,\mathcal{G}_n\in [\mathcal{F}]^{\omega_2}$ such that for every $G_1\in \mathcal{G}_1,\ldots, G_n\in \mathcal{G}_n$, there exist $g_i\in G_i$ such that
$$\left\| \sum \lambda_i g_i \right\| < M.$$
\end{lem}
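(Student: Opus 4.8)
The plan is to reduce the statement about norms in $C(\mathfrak{K})$ to the Boolean principle already established in Theorem~\ref{omegathm}, following the same scheme as in the proof of that theorem but replacing Boolean equations by approximate identities for norms. First I would reformulate the hypothesis $\|\sum\lambda_i f_i\|<M$ in terms of the clopen algebra $\mathfrak{B}$. Since each $f_i\in C(\mathfrak{K})$ and $\mathfrak{K}$ is zero-dimensional, $f_i$ can be uniformly approximated by simple functions $\sum_k c_{i,k}\mathbf{1}_{b_{i,k}}$ with $b_{i,k}\in\mathfrak{B}$; the condition that a simple function $\sum_j d_j\mathbf{1}_{e_j}$ (with the $e_j$ forming a clopen partition) has norm $<M$ is exactly the disjunction over $j$ of the Boolean statements ``$e_j=0$ or $|d_j|<M$'', which after refining partitions becomes a finite family of Boolean equations in the clopen pieces involved. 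So the plan is to discretize: fix a rational $\delta>0$ with (roughly) $M-\delta$ still witnessing the strict inequality after perturbation, approximate each $f_i$ by a $\delta$-close simple function valued in $\delta\mathbb{Z}$, and record the finitely many clopen sets and scalar coefficients used.

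The obstacle is uniformity: Theorem~\ref{omegathm} handles a single fixed Boolean polynomial $P$, whereas here different members $F_i\in\mathcal{F}$ may need different simple-function approximations of their elements $f_i$, with different clopen pieces and coefficients. To fix this I would first apply a counting/pigeonhole argument. Enumerate, for each $F\in\mathcal{F}$ and each candidate tuple $(f_1,\ldots,f_n)\in F^n$ (only countably many per $F$), a ``code'' consisting of the coefficient pattern (a tuple of rationals in $\delta\mathbb{Z}$) and the finite Boolean polynomial type describing how norm-$<M$ is expressed in terms of the relevant clopen atoms. There are only countably many such codes. Passing to a subfamily $\mathcal{G}\in[\mathcal{F}]^{\omega_2}$ on which the relevant code is constant (possible since $\omega_2$ is regular and there are countably many codes, using that the hypothesis gives, for each $\omega_2$-subfamily, a witnessing tuple — one should be slightly careful and instead fix the code first, then check the hypothesis still applies to the subfamily), we reduce to the situation where a single Boolean polynomial $P$ and single coefficient pattern work for all witnessing tuples.

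With the code fixed, I would apply Theorem~\ref{omegathm} to $\mathcal{G}$ (viewed as a family of countable subsets of $\mathfrak{B}$, after replacing each $F$ by the countable set of all clopen pieces occurring in the simple-function approximations of tuples from $F$ with the fixed code) and this polynomial $P$. The hypothesis of the present lemma, namely that every $\omega_2$-subfamily contains a witnessing $n$-tuple, says precisely that alternative~(1) of Theorem~\ref{omegathm} fails: in any $\mathcal{G}'\in[\mathcal{G}]^{\omega_2}$ there are different $F_1,\ldots,F_n$ and a choice of clopen elements satisfying $P=0$. Hence alternative~(2) holds, yielding $\mathcal{H}_1,\ldots,\mathcal{H}_n\in[\mathcal{G}]^{\omega_2}$ such that for all $H_i\in\mathcal{H}_i$ there are clopen elements in the $H_i$ satisfying $P=0$. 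Translating back, for every $G_1\in\mathcal{H}_1,\ldots,G_n\in\mathcal{H}_n$ the corresponding simple functions satisfy $\|\sum\lambda_i(\text{simple approx of }g_i)\|\le M-\delta'$ for the appropriate $\delta'$, and since the approximations are within $\delta$ of the $g_i$ in norm, $\|\sum\lambda_i g_i\|<M$, with $\delta$ chosen small enough relative to $\delta'$ and $\sum|\lambda_i|$ at the outset. Setting $\mathcal{G}_i=\mathcal{H}_i$ finishes the proof. The one delicate point to get right is the order of quantifiers in the pigeonhole step — fixing the code on an $\omega_2$-subfamily before invoking the hypothesis, rather than after — and the bookkeeping of $\delta$ versus $M$ so that strict inequalities survive both the approximation and the loss of strictness when passing from ``$<M$'' to a closed Boolean condition.
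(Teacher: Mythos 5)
Your overall strategy --- discretize each $f$ by clopen sets, express the norm bound as Boolean equations, and transfer those equations by the symmetry machinery --- is the same as the paper's, but the reduction to Theorem~\ref{omegathm} has a genuine gap. The norm condition $\left\|\sum\lambda_i g_i\right\|<M$ is a \emph{conjunction} of finitely many Boolean equations, each involving several clopen pieces of each $g_i$ (for every tuple of pieces $(b_{1,k_1},\ldots,b_{n,k_n})$ whose associated coefficients give $\left|\sum_i\lambda_i c_{i,k_i}\right|\geq M$ one needs $\bigwedge_i b_{i,k_i}=0$). Theorem~\ref{omegathm} is stated for a single polynomial $P(x_1,\ldots,x_n)$ evaluated at \emph{one} element chosen from each $H_i$, and its alternative (2) only produces, for each choice of $H_i$'s, \emph{some} elements $a_i\in H_i$ with $P(a_1,\ldots,a_n)=0$; it gives no control over which elements these are, so nothing guarantees that the witnesses for the various equations cohere into the discretization of a single function $g_i\in G_i$. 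Applying the theorem once per equation does not help either, since the resulting families would differ from equation to equation and $\omega_2$-sized subfamilies need not have a common $\omega_2$-sized intersection. This coherence problem is exactly what the paper's proof is engineered to solve: it goes back to Theorem~\ref{subfamilyP}, records the \emph{entire} profile $J(f)=(I(c[f,p,q]))_{(p,q)\in W}$ of each function as a single point of the compact metrizable space $G(R)^W$, prunes $\mathcal{F}$ to the set $L$ of $\omega_2$-condensation points, and then takes a \emph{finite intersection} of the open sets $G^W_P[(p_1,q_1),\ldots,(p_n,q_n)](R)$ to obtain one neighbourhood $V_i$ of $J(f_i)$ that simultaneously enforces all the relevant equations for any $g_i$ with $J(g_i)\in V_i$.

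The pigeonhole step you yourself flag as delicate is also a real problem, and the paper avoids it rather than solves it. For $n\geq 2$ the ``code'' is attached to witnessing $n$-tuples, not to single members of $\mathcal{F}$, so fixing a code on an $\omega_2$-subfamily is a Ramsey-type assertion about colourings of tuples with countably many colours; it does not follow from the hypothesis by counting, because the intersection of countably many $\omega_2$-sized subfamilies (one for each discarded code) can have size $<\omega_2$ --- only the ideal of sets of size $\leq\omega_1$ is $\sigma$-complete here, not the collection of $\omega_2$-sized sets. In the paper no such uniformization is needed: after the pruning to $L$ (which costs only $\omega_1$ many members of $\mathcal{F}$, by second countability of $G(R)^W$) a \emph{single} witnessing tuple suffices, and the neighbourhoods $V_i$ of its profiles define the families $\mathcal{G}_i$ directly. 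Your final bookkeeping of $\delta$ versus $M$ does match the paper's argument with the intermediate constant $M'$ and the uniform continuity of $(\mu_1,\ldots,\mu_n)\mapsto\left|\sum\lambda_i\mu_i\right|$, but as it stands the core transfer step is not justified by the tools you invoke.
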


\begin{proof}
For every rationals $p<q$ and every $f\in C(\mathfrak{K})$ fix a clopen set $c[f,p,q]\in \mathfrak{B}$ such that
$$ \{x\in \mathfrak{K} : f(x)\leq p\} \subset c[f,p,q] \subset \{x\in\mathfrak{K} : f(x)<q\}.$$
Let us call $W = \{(p,q)\in\mathbb{Q}^2 : p<q\}$.

If we consider $H_F = \{c[f,p,q] : f\in F, (p,q)\in W\}$, then we can apply Theorem~\ref{subfamilyP} to the family $\mathcal{H} = \{H_F : F\in\mathcal{F}\}$. Thus, wihout loss of generality, we shall assume that   we have a countable algebra $R\subset \mathfrak{B}$ and that this family $\mathcal{H}$ satisfies the property stated in Theorem~\ref{subfamilyP} for $\mathcal{H}'$. For every $a\in \mathfrak{B}$, let $I^-(a) = \{r\in R : r\leq a\}$, $I^+(a) = \{r\in R: r\geq a\}$, and $I(a) = (I^-(a),I^+(a))\in G(R)$. For every $f\in C(\mathfrak{K})$, let also $J(f) = (I(c[f,p,q]))_{(p,q)\in W} \in G(R)^W$, and $J[F] = \{J(f) : f\in F\}$. Since $W$ is countable, $G(R)^W$ is a compact metrizable space.

We consider the compact subset $L$ of $G(R)^W$ obtained by removing  all basic open subsets $V$ of $G(R)^W$ for which $|\{F\in \mathcal{F} : V\cap J[F] \neq \emptyset\}| < \omega_2$, and we also remove from $\mathcal{F}$ all $F$ whose $J[F]$ have nonempty intersections with the removed $V$'s. In this way, we can suppose that $J(f)\in L$ for all $f\in F\in \mathcal{F}$, and also that for every nonempty open subset $V$ of $L$,  $$\{F\in \mathcal{F} : J[F]\cap V\neq \emptyset\}|=\omega_2.$$
By the hypothesis of the theorem, we can find  different $F_1,\ldots,F_n\in \mathcal{F}$ and $f_i\in F_i$ such that $$\left\| \sum \lambda_i f_i \right\| < M.$$
Fix also $M'$ such that

$$\left\| \sum \lambda_i f_i \right\| <M' < M,$$

take $\varepsilon<\min(M-M',1)$, and $N\in \mathbb{N}$ such that $\max_i \|f_i\| < N$. The function $\phi:[-N,N]^n \To \mathbb{R}$ given by $\phi(\mu_1,\ldots,\mu_n) = \|\sum_i \lambda_i \mu_i\|$ is uniformly continuous, so we can find $\delta\in (0,\varepsilon)$ such that $\|x-y\|_\infty <4\delta$ implies $|\phi(x)-\phi(y)|<\varepsilon$. We can also take $\delta$ to be the inverse of a natural number. Let $Q_0 = \delta \mathbb{Z} \cap [-5N,5N]$. Thus, $Q_0$ is a finite set of rationals that is $\delta$-dense in $[-5N,5N]$. Given a Boolean polynomial $P(x_1,\ldots,x_n)$ and $(p_i,q_i)\in W\cap Q_0^2$ such that
$$ P(c[f_1,p_1,q_1],\ldots,c[f_n,p_n,q_n]) = 0,$$
what we have is that
$$(J(f_1),\ldots,J(f_n)) \in G^W_P[(p_1,q_1),\ldots,(p_n,q_n)](R)$$
Thus we can find open neighborhoods $V_i$ of $J(f_i)$ in $L$ such that

$$V_1\times\cdots\times V_n \subset$$ 
$$\bigcap\left\{G^W_P[(p_1,q_1),\ldots,(p_n,q_n)](R) : P(c[f_1,p_1,q_1],\ldots,c[f_n,p_n,q_n]) = 0, (p_i,q_i)\in Q_0^2\right\}.$$
By the properties of $L$, the sets
$$\mathcal{G}_i = \{F\in \mathcal{F} : V_i\cap J[F] \neq \emptyset\}$$
have cardinality $\omega_2$. Now, we consider any $G_i\in \mathcal{G}_i$ and we pick $g_i\in G_i$ such that $J(g_i)\in V_i$.
This implies that for every Boolean polynomial $P(x_1,\ldots,x_n)$ and every $(p_i,q_i)\in W\cap Q_0^2$ we have that
$$ P(c[f_1,p_1,q_1],\ldots,c[f_n,p_n,q_n]) = 0 \ \Rightarrow P(c[g_1,p_1,q_1],\ldots,c[g_n,p_n,q_n]) = 0.$$
In particular, $\|g_i\|\leq N+\delta$ for every $i$, because
$$ \|f_i\| < N \Rightarrow c[f_i,N,N+\delta]^c \cup c[f_i,-N-\delta,-N] = 0 $$  $$\Rightarrow c[g_i,N,N+\delta]^c\cup c[g_i,-N-\delta,-N] = 0 \Rightarrow \|g_i\| \leq N+\delta.$$

We claim that
$$\left\| \sum \lambda_i g_i \right\| < M.$$
Suppose for contradicition that this was not the case. This means that there is a point $x\in \mathfrak{K}$ such that 
$$\left|\sum \lambda_i g_i(x)\right| \geq M.$$
choose $p_i\leq g_i(x) \leq q_i$ with $p_i,q_i\in Q_0$ and $q_i-p_i=\delta.$ We have that
$$ x\in \bigcap_i c[g_i,q_i,q_i+\delta] \setminus c[g_i,p_i-\delta,p_i],$$
in particular
$$\bigcap_i c[g_i,q_i,q_i+\delta] \setminus c[g_i,p_i-\delta,p_i] \neq 0, $$
therefore
$$\bigcap_i c[f_i,q_i,q_i+\delta] \setminus c[f_i,p_i-\delta,p_i] \neq 0. $$
If we choose a point $y$ in the nonempty clopen set above, then $p_i-\delta< f_i(y) < q_i+\delta$. Hence $|f_i(y)-g_i(x)|<3\delta$. By the choice of $\delta$ from uniform continuity, this implies that
$$\left|\sum \lambda_i f_i(y)\right| \geq M-\varepsilon > M',$$
a contradiction.
\end{proof}

\begin{thm}\label{notellinfty}
Suppose that  the Boolean algebra $\mathcal{P}(\omega)/fin$ contains an $\omega_2$-chain. Then $\ell_\infty$ is not isomorphic to a subspace of $C(\mathfrak{K})$.
\end{thm}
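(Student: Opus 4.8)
The plan is to argue by contradiction: suppose $T:\ell_\infty\to C(\mathfrak{K})$ is an isomorphic embedding, with $\|x\|\le\|Tx\|\le\Lambda\|x\|$ for all $x$. The idea, following the trick of \cite{BrechKoszmider}, is to take the hypothetical $\omega_2$-chain $\{b_\alpha:\alpha<\omega_2\}$ in $\mathcal{P}(\omega)/fin$, lift it to actual subsets $B_\alpha\subset\omega$ with $B_\alpha\subset^* B_\beta$ for $\alpha<\beta$, and consider the images $f_\alpha=T(\mathbf{1}_{B_\alpha})\in C(\mathfrak{K})$, where $\mathbf{1}_{B_\alpha}\in\ell_\infty$ is the indicator of $B_\alpha$. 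Since $B_\alpha\subset^* B_\beta$ means $\mathbf{1}_{B_\beta}-\mathbf{1}_{B_\alpha}$ is (up to a finitely supported, hence norm $\le 1$, perturbation) a $\{0,1\}$-valued vector, and similarly $\mathbf{1}_{B_\alpha}+\mathbf{1}_{\omega\setminus B_\beta}$ has norm close to $1$ when $\alpha<\beta$, while for $\alpha>\beta$ (so $B_\alpha\not\subset^* B_\beta$) these vectors have norm $2$. The asymmetry between $\alpha<\beta$ and $\alpha>\beta$ in $\ell_\infty$ will be transported to $C(\mathfrak{K})$ and then contradicted by the symmetry principle of Lemma~\ref{norminequality}.

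Concretely, here is the sequence of steps. First, fix the lifts $B_\alpha$ and the singletons $F_\alpha=\{f_\alpha\}$ where $f_\alpha = T(\mathbf{1}_{B_\alpha})$; set $\mathcal{F}=\{F_\alpha:\alpha<\omega_2\}$. Second, compute the relevant norm inequality that holds \emph{asymmetrically}: for $\alpha<\beta$ one has $\|\mathbf{1}_{B_\beta}-\mathbf{1}_{B_\alpha}\|_{\ell_\infty}\le 1$ after adjusting for the finite set where $B_\alpha\setminus B_\beta$ lives — but since that finite set contributes a vector of norm $\le 1$ supported on finitely many coordinates, we actually have $\|\mathbf{1}_{B_\beta}-\mathbf{1}_{B_\alpha}\|=1$ exactly unless $B_\alpha=B_\beta$ mod finite. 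To avoid the finite-support nuisance, the cleaner choice (as in \cite{BrechKoszmider}) is to work with a vector whose norm genuinely jumps: for instance consider $u_{\alpha\beta} = \mathbf{1}_{B_\alpha} - \mathbf{1}_{B_\beta}$; when $\alpha<\beta$, $B_\alpha\setminus B_\beta$ is finite so $u_{\alpha\beta}$ has norm $1$ and is finitely supported \emph{on the positive side}, whereas $B_\beta\setminus B_\alpha$ is infinite so... actually both $\pm 1$ values appear and $\|u_{\alpha\beta}\|=1$; when $\alpha>\beta$, $B_\alpha\setminus B_\beta$ is infinite, giving value $+1$ on infinitely many coords and $-1$ on infinitely many, still norm $1$. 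So one must instead use the genuinely asymmetric quantity: $\|\mathbf{1}_{\omega} - \mathbf{1}_{B_\alpha} + \mathbf{1}_{B_\beta}\| = \|\mathbf{1}_{(\omega\setminus B_\alpha)} + \mathbf{1}_{B_\beta}\|$, which equals $1$ iff $B_\beta\subset^* B_\alpha$ (i.e. $\beta\le\alpha$) and equals $2$ otherwise. Applying $T$, $\|T(\mathbf{1}_\omega) - f_\alpha + f_\beta\| \le \Lambda$ when $\beta\le\alpha$, while $\ge 2$ when $\beta>\alpha$. Absorb the constant $T(\mathbf{1}_\omega)$: since $C(\mathfrak{K})$ is a function space we can enlarge each $F_\alpha$ to $\{f_\alpha, f_\alpha - T(\mathbf{1}_\omega)\}$, or simply note $T(\mathbf{1}_\omega)$ is a single fixed function $g$, and work with the polynomial-free norm expression $\|\,g - x_1 + x_2\,\|$ where $x_1$ ranges over $F_\alpha$ and $x_2$ over $F_\beta$; to fit the format of Lemma~\ref{norminequality}, which handles $\|\sum\lambda_i f_i\|$ with $f_i$ drawn from the families, we incorporate $g$ by adding a constant family $F_0=\{g\}$ repeated, or rescale. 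This is the bookkeeping step and the main technical obstacle: massaging the $\ell_\infty$-geometry into the exact shape $\|\lambda_1 f_1 + \cdots + \lambda_n f_n\| < M$ that Lemma~\ref{norminequality} accepts.

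Third, once the asymmetry is phrased as: for every $\alpha>\beta$ there exist choices from $F_\alpha,F_\beta$ (and the auxiliary constant family) making the norm $<M$ (for a suitable $M$ with $\Lambda<M<2$), while for $\alpha<\beta$ no such choice works — apply Lemma~\ref{norminequality}. The lemma's hypothesis is satisfied: given any $\mathcal{G}\in[\mathcal{F}]^{\omega_2}$ we can find $F_\alpha, F_\beta\in\mathcal{G}$ with $\alpha>\beta$ (any two elements, taking the one with larger index first), so the norm-$<M$ configuration exists. The conclusion then yields $\mathcal{G}_1,\mathcal{G}_2\in[\mathcal{F}]^{\omega_2}$ such that for \emph{every} $G_1\in\mathcal{G}_1$ and $G_2\in\mathcal{G}_2$ there are $g_i\in G_i$ with $\|\cdots\|<M$. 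But now pick $F_\alpha\in\mathcal{G}_1$ and $F_\beta\in\mathcal{G}_2$ with $\alpha<\beta$ — possible since both families have size $\omega_2$, so $\mathcal{G}_2$ has elements with index above any given one in $\mathcal{G}_1$. For this pair the norm is forced $<M<2$, contradicting $\ge 2$. This contradiction shows $\ell_\infty$ does not embed into $C(\mathfrak{K})$.

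The genuinely delicate point, worth stating carefully, is the use of the isomorphism constant and the need for a \emph{single scalar expression} $\sum\lambda_i f_i$: we want a linear combination of functions from finitely many of the $F_\alpha$'s that simultaneously encodes $g - f_\alpha + f_\beta$. Since $g=T(\mathbf{1}_\omega)$ is fixed, the slickest route is to replace $T$ at the outset by noting that $\mathbf{1}_\omega\in\ell_\infty$ and observing that $\|\mathbf{1}_{B_\beta\setminus B_\alpha}\|_{\ell_\infty} = 1$ if $\beta>\alpha$ and $=0$ if $\beta\le\alpha$ (mod finite); then $\mathbf{1}_{B_\beta\setminus B_\alpha} = \mathbf{1}_{B_\beta} - \mathbf{1}_{B_\alpha\cap B_\beta}$, and modulo finite sets $B_\alpha\cap B_\beta = B_{\min(\alpha,\beta)}$. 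So for $\alpha<\beta$: $\mathbf{1}_{B_\beta\setminus B_\alpha} \equiv \mathbf{1}_{B_\beta} - \mathbf{1}_{B_\alpha}$ has norm $1$, while for $\alpha>\beta$: $\mathbf{1}_{B_\beta\setminus B_\alpha}\equiv 0$. Hence $\|f_\beta - f_\alpha\|$ (which equals $\|T(\mathbf{1}_{B_\beta}-\mathbf{1}_{B_\alpha})\|$ up to the norm-$\le\Lambda$ error from the finite symmetric difference correction) is $\le\Lambda$ when $\alpha>\beta$ and $\ge 1$ when $\alpha<\beta$ — but $1<\Lambda$ in general, so this particular expression is \emph{not} asymmetric enough. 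That is precisely why one needs the $+\mathbf{1}_{\omega\setminus B_\beta}$ term to push the norm up to $2$; handling the finite corrections and the constant function $\mathbf{1}_\omega$ within the rigid format of Lemma~\ref{norminequality} (possibly by repeating families or by a preliminary reduction absorbing $g$ into a translated copy of each $f_\alpha$) is where the real care goes, and I expect that to be the bulk of the argument.
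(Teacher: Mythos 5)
Your overall strategy --- transport an asymmetric norm estimate from $\ell_\infty$ through $T$ and then contradict it with the symmetry principle of Lemma~\ref{norminequality} --- is exactly the paper's, but the proposal has a fatal quantitative gap: the isomorphism constant. Your dichotomy is a fixed gap of factor $2$ (norm $1$ when $\beta\le\alpha$ versus norm $2$ when $\beta>\alpha$), and your contradiction requires a threshold $M$ with $\Lambda<M<2$, i.e.\ it only works when the distortion of $T$ is less than $2$. An arbitrary isomorphic embedding of $\ell_\infty$ carries no such bound, and a ratio-$2$ gap between preimages is completely washed out by a distortion of, say, $10$. The paper's fix is an amplification that you are missing: choose $n>\|T\|\cdot\|T^{-1}\|$ and work with the $2n$-term alternating sum $\sum_{i=1}^n \bigl(1_{c_{2i}}-1_{c_{2i-1}}\bigr)$, which has norm $1$ when the indices increase (the supports $c_{2i}\setminus c_{2i-1}$ are then pairwise disjoint) but norm $n$ when the indices are permuted so that all odd-indexed sets lie $\subset^*$-below all even-indexed ones (then the $n$ difference sets have a common point). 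A gap of factor $n$ defeats any fixed distortion; Lemma~\ref{norminequality} is stated for $n$ families precisely to permit this.

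The second gap, which you flag but do not resolve, is the finite-correction issue, and with singleton families it cannot be resolved: for fixed lifts $B_\alpha$ the ``good'' side of your dichotomy is simply false, since $1_{\omega\setminus B_\alpha}+1_{B_\beta}$ takes the value $2$ on the finite but possibly nonempty set $B_\beta\setminus B_\alpha$ even when $\beta\le\alpha$, so its norm is $2$, not $1$. The paper's device is to take $F_\alpha=\{T(1_c):c\in C_\alpha\}$, the countable set of images of \emph{all} representatives of the equivalence class $C_\alpha$: for finitely many classes in increasing order one can then choose representatives realizing exact inclusions $c_1\subset c_2\subset\cdots$, which yields the norm-$1$ estimate on the nose, while on the permuted side the $\subset^*$ relations of whatever representatives Lemma~\ref{norminequality} hands back already force $\bigcap_{i=1}^n(c_{2i}\setminus c_{2i-1})\neq\emptyset$. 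This also makes the constant $T(1_\omega)$ and your auxiliary family $F_0=\{g\}$ unnecessary.
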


\begin{proof}
Suppose that $T:\ell_\infty\To C(\mathfrak{K})$ is an isomorphism onto a subspace, and that $\{c_\alpha\}$ is an increasing $\subset^\ast$-chain in $\mathcal{P}(\omega)$. We shall reach a contradiciton. Let $C_\alpha$ be the equivalence class of $c_\alpha$ modulo finite sets. For each $\alpha<\omega_2$, let $F_\alpha = \{T(1_c) : c \in C_\alpha\}$, and let $\mathcal{F} = \{ F_\alpha : \alpha<\omega_2\}$. We choose a natural number $n>\|T\|\cdot \|T^{-1}\|$. For every $\alpha_1<\cdots<\alpha_{2n}$, we can choose $c_i\in C_{\alpha_i}$ such that $c_1\subset c_2\subset \cdots\subset c_n$, and in this way we have
$$\left\| \sum_{i=1}^n 1_{c_{2i}} - 1_{c_{2i-1}}\right\| = 1 $$
and
$$\left\| \sum_{i=1}^n T(1_{c_{2i}}) - T(1_{c_{2i-1}})\right\| \leq \|T\|.$$
We are in a position to apply Lemma~\ref{norminequality}, and we get $\mathcal{G}_1,\ldots,\mathcal{G}_{2n}\in [\omega_2]^{\omega_2}$ such that whenever we pick $\gamma_i\in \mathcal{G}_i$ we can find $c_i\in C_{\gamma_i}$ such that
$$(\star)\ \ \ \left\| \sum_{i=1}^n T(1_{c_{2i}}) - T(1_{c_{2i-1}})\right\| \leq \|T\|.$$
Since the sets $\mathcal{G}_i$ are cofinal in $\omega_2$ we can pick such ordinals $\gamma_i$ to follow the order $$\gamma_1<\gamma_3<\gamma_5<\cdots<\gamma_{2n-1}<\gamma_{2n}<\gamma_{2n-2}<\cdots<\gamma_4<\gamma_2.$$
Then, 
$$c_1\subset^\ast c_3\subset^\ast c_5\subset^\ast \cdots\subset^\ast c_{2n-1}\subset^\ast c_{2n}\subset^\ast c_{2n-2}\subset^\ast \cdots\subset^\ast c_4\subset^\ast c_2,$$
hence $$\bigcap_{i=1}^n c_{2i}\setminus c_{2i-1} \neq \emptyset,$$
$$\left\| \sum_{i=1}^n 1_{c_{2i}} - 1_{c_{2i-1}}\right\| = n,$$
$$ \left\| \sum_{i=1}^n T(1_{c_{2i}}) - T(1_{c_{2i-1}})\right\| \geq n\|T^{-1}\|^{-1}.$$
But this contradicts the inequality $(\star)$ and our choice of  $n>\|T\|\cdot \|T^{-1}\|$.
\end{proof}

If we apply Theorem~\ref{notellinfty} when $\mathfrak{K}$ is the Stone space of the Boolean algebra of Proposition~\ref{exists}, we get an F-space $\mathfrak{K}$, so that $C(\mathfrak{K})$ is 1-separably injective:

\begin{cor}\label{main-result}
Suppose that  the Boolean algebra $\mathcal{P}(\omega)/fin$ contains an $\omega_2$-chain. Then there exists a 1-separably injective Banach space $C(\mathfrak{K})$ such that $\ell_\infty$ is not isomorphic to a subspace of $C(\mathfrak{K})$.
\end{cor}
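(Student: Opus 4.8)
The plan is to combine the two main ingredients already prepared. By Proposition~\ref{exists} there is a tightly $\sigma$-filtered Boolean algebra $\mathfrak{B}$ of size $\mathfrak{c}$ with the countable separation property, and its Stone space $\mathfrak{K}$ is therefore an $F$-space. As recalled in the introduction, $\mathfrak{K}$ being an $F$-space is exactly equivalent to $C(\mathfrak{K})$ being $1$-separably injective (this is the Nachbin--Kelley-type characterization cited via \cite{chu}, together with the translation of property $1$ of the Boolean-algebra list into property $3$ of the Banach-space list). So the first half of the corollary, that $C(\mathfrak{K})$ is $1$-separably injective, is immediate from the construction of $\mathfrak{B}$.

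For the second half, I would apply Theorem~\ref{notellinfty} directly to this same $\mathfrak{K}$. Theorem~\ref{notellinfty} says that whenever $\mathcal{P}(\omega)/fin$ contains an $\omega_2$-chain, $\ell_\infty$ does not embed isomorphically into $C(\mathfrak{K})$ --- and the hypothesis of the corollary is precisely that $\mathcal{P}(\omega)/fin$ has an $\omega_2$-chain. Hence $\ell_\infty$ is not isomorphic to a subspace of $C(\mathfrak{K})$, which completes the proof. There is essentially no further work: the corollary is just the conjunction of Proposition~\ref{exists} (giving the $F$-space, hence $1$-separable injectivity) and Theorem~\ref{notellinfty} (giving the failure of $\ell_\infty$-embeddability), specialized to the algebra from Proposition~\ref{exists}.

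The only point that deserves a sentence of care --- and the closest thing to an obstacle --- is checking that the same Stone space serves both roles simultaneously: Theorem~\ref{notellinfty} is stated for $\mathfrak{K}$ the Stone space of an arbitrary tightly $\sigma$-filtered $\mathfrak{B}$ (that is the standing assumption fixed at the start of Section~4, and Section~5), while Proposition~\ref{exists} produces a specific such $\mathfrak{B}$ that additionally has the countable separation property. Since the tightly $\sigma$-filtered hypothesis is all that Theorem~\ref{notellinfty} uses, and the countable separation property is all that is needed for the $F$-space conclusion, there is no conflict, and both conclusions hold for $C(\mathfrak{K})$ with $\mathfrak{K} = \mathfrak{K}_{\mathfrak{B}}$ for this one algebra. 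I would simply remark that we take $\mathfrak{K}$ to be the Stone space of the Boolean algebra of Proposition~\ref{exists}, observe it is an $F$-space by the countable separation property, invoke the $F$-space/$1$-separable-injectivity equivalence for the first assertion, and invoke Theorem~\ref{notellinfty} under the hypothesis on $\mathcal{P}(\omega)/fin$ for the second.
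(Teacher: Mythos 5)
Your proposal is correct and matches the paper's own argument exactly: the paper likewise takes $\mathfrak{K}$ to be the Stone space of the algebra from Proposition~\ref{exists}, notes that the countable separation property makes it an $F$-space so that $C(\mathfrak{K})$ is $1$-separably injective, and then applies Theorem~\ref{notellinfty}. Your extra remark that the single algebra serves both roles is a reasonable point of care but requires no further work.
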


The hypothesis that $\mathcal{P}(\omega)/fin$ contains an $\omega_2$-chain holds under Martin's Axiom and the negation of the Continuum Hypothesis.

\end{document}